\newcommand{\RR}{{\mathbb R}}
\newcommand{\SP}{{\mathbb S}}
\newcommand{\HS}{{\mathbb H}}
\theoremstyle{plain}
\newtheorem{thm}{Theorem}
\newtheorem{prop}{Proposition}[section]
\newtheorem{lem}[prop]{Lemma}
\theoremstyle{definition}
\newtheorem{rem}{Remark}[section]
\newtheorem{defn}[prop]{Definition} 
\numberwithin{equation}{section}
\def\squarebox#1{\hbox to #1{\hfill\vbox to #1{\vfill}}}
\def\reals{{\mathbb R}}
\def\Ci{{\mathcal C}^\infty}
\def\O{{\mathcal O}}
\def\phi{\varphi}
\def\dist{\text{dist}\,}
\def\be{\begin{eqnarray*}}
\def\ee{\end{eqnarray*}}
\def\ben{\begin{eqnarray}}
\def\een{\end{eqnarray}}
\def\L2R{L_{\text{Rest}}^2}
\def\11{\mathds{1}}
\def\RR{\mathbb{R}}
\def\L2c{L^2_{\text{comp}}}
\def\tV{\widetilde{V}}
\def\tf{\tilde{f}}
\title
[Hyperbolic Solitons]
{Existence and stability of solitons for the nonlinear Schr\"odinger
  equation on hyperbolic space}
\author[H. Christianson]
{Hans Christianson}
\email{hans@math.mit.edu}
\address{Massachusetts Institute of Technology, Department of Mathematics\\
77 Mass. Ave., Cambridge, MA 02139-4307, USA}
\author[J.L. Marzuola]
{Jeremy L. Marzuola}
\email{jm3058@columbia.edu}
\address{Applied Mathematics Department, Columbia University \\
200 S. W. Mudd, 500 W. 120th St., New York City, NY 10027, USA}
\begin{document}    
   
\begin{abstract}
We study the existence and stability of ground state solutions or solitons to a nonlinear stationary equation on hyperbolic space.  The method of concentration compactness applies and shows that the results correlate strongly to those of Euclidean space.
\end{abstract}

\maketitle 

\section{Introduction}

In this note, we explore the existence of positive bound state solutions to the nonlinear Schr\"odinger equation on hyperbolic space ($\HS^d$-NLS)
\begin{eqnarray}
\left\{ \begin{array}{c}
iu_t + \Delta_{\HS^d} u + f(|u|) u = 0, \ \Omega \in \HS^d  \\
u(0,\Omega) = u_0 (\Omega),
\end{array} \right.
\label{eqn:hs}
\end{eqnarray}
where $\Delta_{\HS^d}$ is the (non-positive definite) Hyperbolic Laplacian and 
\begin{eqnarray*}
f(s) = s^{p}
\end{eqnarray*}
for $\frac{4}{d-2} > p > 0$.  Specifically, we seek to find solutions of the form
\begin{eqnarray*}
u(t,\Omega) = e^{i \lambda t} R_\lambda (\Omega) ,
\end{eqnarray*} 
where $R_\lambda$ is a solution for the resulting stationary problem
\begin{eqnarray}
\label{eqn:stat}
-\Delta_{\HS^d} R_\lambda + \lambda R_\lambda - R^{p+1}_\lambda = 0.
\end{eqnarray}
Our main result
is the following theorem.  Here and in the sequel, we adopt the
convention that for $d = 2$, $p< 4/(d-2)$ means $p < \infty$.

\begin{thm}
\label{thm:main-cm}
Fix $d \geq 2$.  For all $\lambda > (d-1)^2/4$, $\frac{4}{d-2} > p > 0$, there exists a solution to \eqref{eqn:hs} of the form
\begin{eqnarray*}
u(t,\Omega) = e^{i \lambda t} R_\lambda (\Omega),
\end{eqnarray*}
where $R_\lambda$ is a positive, decreasing, spherically symmetric solution of the equation \eqref{eqn:stat}.
\end{thm}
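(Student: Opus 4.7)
The plan is to obtain $R_\lambda$ as a minimizer of a constrained variational problem and extract a spherically symmetric, positive, decreasing solution via symmetrization, with concentration compactness used to recover compactness of the minimizing sequence. Specifically, for $\sigma>0$ I set
\begin{equation*}
I_\sigma \;=\; \inf\left\{\tfrac{1}{2}\int_{\HS^d}\bigl(|\nabla_{\HS^d} u|^2 + \lambda |u|^2\bigr)\,d\mu \;:\; u \in H^1(\HS^d),\ \int_{\HS^d}|u|^{p+2}\,d\mu = \sigma \right\}.
\end{equation*}
The hypothesis $\lambda>-((d-1)/2)^2$ lies strictly above the bottom of the $L^2$-spectrum of $-\Delta_{\HS^d}$, so the quadratic form $\int(|\nabla u|^2+\lambda u^2)$ is equivalent to the $H^1$-norm. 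Together with the hyperbolic Sobolev embedding $H^1(\HS^d)\hookrightarrow L^{p+2}(\HS^d)$ (valid since $p+2<2d/(d-2)$), this forces $I_1>0$ and gives $I_\sigma=\sigma^{2/(p+2)}I_1$ by homogeneity; once the existence of a minimizer $R$ is established, the Lagrange multiplier $\mu=2I_1/(p+2)>0$ yields $-\Delta R+\lambda R=\mu R^{p+1}$, and a trivial rescaling produces $R_\lambda$ solving \eqref{eqn:stat}.

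Before applying concentration compactness, I would pass to a more tractable minimizing sequence. Replacing $u_n$ by $|u_n|$ does not increase $I_\sigma$; replacing $|u_n|$ by its hyperbolic symmetric-decreasing rearrangement about a chosen base point $o\in\HS^d$ preserves the constraint (Lebesgue norms are preserved) and does not increase the Dirichlet energy, by the P\'olya--Szeg\H o inequality in hyperbolic geometry. So I may assume every $u_n$ is nonnegative, radial about $o$, and radially nonincreasing. Coercivity gives $u_n$ bounded in $H^1$, and I extract a weak limit $R\ge 0$ which is still radial and nonincreasing.

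The heart of the argument is Lions's concentration compactness applied to the probability measures $\rho_n=|u_n|^{p+2}\,d\mu/\sigma$. Vanishing is excluded by the $\HS^d$-analogue of Lions's lemma: if $\sup_{y\in\HS^d}\int_{B(y,R)}|u_n|^2\,d\mu\to 0$ for every $R$, then $u_n\to 0$ in $L^{p+2}$, contradicting $\int|u_n|^{p+2}=1$. Dichotomy is excluded by the strict subadditivity
\begin{equation*}
I_{\sigma_1+\sigma_2}\;=\;(\sigma_1+\sigma_2)^{2/(p+2)}I_1 \;<\; \sigma_1^{2/(p+2)}I_1+\sigma_2^{2/(p+2)}I_1 \;=\; I_{\sigma_1}+I_{\sigma_2},
\end{equation*}
which follows from $2/(p+2)<1$. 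This leaves compactness: after translating by a suitable sequence of hyperbolic isometries (which can be absorbed, since each $u_n$ is radial about $o$, into the original frame), the $u_n$ concentrate and converge strongly in $L^{p+2}$, and together with weak $H^1$-convergence and lower semicontinuity this forces $R\not\equiv 0$ and $R$ to achieve $I_1$. The principal obstacle I expect is making the concentration compactness step rigorous in the hyperbolic setting: Euclidean translation invariance is replaced by the noncompact isometry group of $\HS^d$, and one needs a careful version of Lions's lemma adapted to hyperbolic geodesic balls, together with an honest invocation of hyperbolic symmetrization. Once a nontrivial minimizer $R$ is obtained, standard elliptic regularity applied to \eqref{eqn:stat} yields smoothness, the maximum principle yields $R>0$, and the rearrangement structure yields positivity, radial symmetry, and monotonicity, completing the proof.
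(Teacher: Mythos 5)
Your argument is essentially sound, but it is a genuinely different route from the paper's. The paper never works intrinsically on $\HS^d$: it conjugates $-\Delta_{\HS^d}$ by $\phi(r)=(r/\sinh r)^{(d-1)/2}$ into a Euclidean operator plus a bounded potential $V_d$ and an angular ``offset'' term, at the price of turning the nonlinearity into $K(|x|)|u|^{p+2}$ with $K$ decaying exponentially. Dichotomy and vanishing are then excluded almost for free because the limit problem at infinity has $\bar K=0$, so $J_\lambda^\infty=+\infty$; the radiality reduction is needed there precisely to kill the awkward angular term, and the real work of the paper goes into justifying hyperbolic Schwarz symmetrization (heat-kernel monotonicity on $\HS^d$ plus Draghici's rearrangement theorem). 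You instead stay on $\HS^d$, use the spectral gap $\inf\spec(-\Delta_{\HS^d})=\left(\frac{d-1}{2}\right)^2$ to get coercivity and $I_1>0$, and exclude dichotomy by strict subadditivity from homogeneity; since the functional is invariant under the full (transitive, noncompact) isometry group, the problem at infinity coincides with the original one and your inequality $I_{\sigma_1+\sigma_2}<I_{\sigma_1}+I_{\sigma_2}$ is indeed the right one. What your approach buys is a cleaner, intrinsic formulation with no conjugation bookkeeping; what the paper's buys is that the Euclidean concentration-compactness theorems of Lions apply verbatim. Two points to tighten. First, your compactness endgame (``translating by hyperbolic isometries which can be absorbed into the original frame'') is vague, but you have already done the work that makes it unnecessary: once $u_n$ is radial nonincreasing about $o$ with $\|u_n\|_{H^1}\le C$, the volume growth of $\HS^d$ gives $u_n(r)^2\,\mu(B_r)\le\|u_n\|_{L^2}^2$, hence a uniform bound $u_n(r)\lesssim e^{-(d-1)r/2}$, hence tightness of $|u_n|^{p+2}$ and strong $L^{p+2}$ convergence by Rellich on balls --- the radial embedding $H^1_r(\HS^d)\hookrightarrow L^{p+2}$ is compact for $2<p+2<\frac{2d}{d-2}$, so you do not need the concentration-compactness alternative at all. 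Second, the hyperbolic P\'olya--Szeg\H{o} inequality is exactly the nontrivial symmetrization input the paper proves; you must cite or reproduce it rather than treat it as routine, and the passage from ``nonincreasing'' to the strictly ``decreasing'' profile claimed in the theorem (and the Lagrange-multiplier normalization, where your constant $2I_1/(p+2)$ should multiply $(p+2)R^{p+1}$) needs a final ODE/maximum-principle remark.
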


\begin{rem}
At the time of announcing this result, the authors have been informed
of a brief note by A. Pankov \cite{Pank} outlining a proof of a
similar result.
\end{rem}

\begin{rem}
The hypothesis $p < 4/(d-2)$ is the $H^1$-energy sub-critical regime.
The result in Theorem \ref{thm:main-cm} is precisely analogous to the existence of ground states
for Schr\"odinger equations on Euclidean space.  The key here is that
although the results are not drastically different in the case of
hyperbolic geometry, there are subtle difficulties that must be
overcome.  However, as will be seen in the sequel, with the correct
formulation the existence of solutions $R_\lambda$ will be almost
automatic due precisely to the nature of the background geometry at
infinity.  Hence one is able to show existence of such solutions for
in fact a rather larger class of nonlinearities which actually {\it
  grow} exponentially at infinity (see Section \ref{S:other-nl}).  
\end{rem}

\begin{rem}
In this note we analyze the existence and stability of ground state solutions, $R_\lambda$, but we say nothing about the uniqueness of such a solution.  There exists a very rich history of uniqueness proofs in the Euclidean case using shooting methods on the radial problem, which we believe should apply in this case as well.  For a survey of uniqueness results, see \cite{Mc}.  In addition, there are many interesting questions surrounding bound states once they are shown to exist, for instance the existence of a specific blow-up profile for a critical nonlinearity, if the hyperbolic geometry provides one with asymptotic stability for a wider range of nonlinearities due to the stronger dispersion, and many others.
\end{rem}

\subsection{Hyperbolic Space}

There are several equivalent definitions
of $\HS^d$.  The most intuitive is as an embedded hyperboloid in $\RR^{d+1}$:
\begin{eqnarray*}
\HS^d = \{ v = (v_0,v') \in \RR^{1+d} | \langle v, v \rangle = 1, \ v_{0} > 0 \},
\end{eqnarray*}
where
\begin{eqnarray*}
\langle v, v \rangle =  v_0^2 - | v'|^2 = v_0^2 - (v_1^2 + \cdots + v_d^2).
\end{eqnarray*}
The Lorenzian metric on $\RR^d$,
\begin{eqnarray*}
dl^2 = dv^2 - dv_0^2,
\end{eqnarray*}
restricts to a positive definite metric on $\HS^d$, called the
hyperbolic metric.  This manifold is isometric to the upper half space
\be
\{ x \in \reals^{d} : x_1 >0 \} 
\ee
equipped with the metric 
\be
ds^2 = \frac{dx^2}{x_1^2},
\ee
as well as to the Poincar\'e ball model:
\be
\HS^d = \{z \in \reals^d : |z| <1 \}
\ee
with the metric
\be
ds^2 = \frac{4 dz^2}{(1-|z|^2)^2}.
\ee
In this note, we wish to exploit certain spherical symmetries, so we
use the polar model:
\begin{eqnarray*}
\HS^d = \{ (t,x) \in \RR^{1+d} | (t,x) = ( \cosh(r),\sinh(r) \omega), r \geq 0, \omega \in \SP^{d-1} \}.
\end{eqnarray*}
From the polar coordinate parametrization of $\HS^d$, we see
\begin{eqnarray*}
dt = \sinh(r) dr, \ dx = \cosh(r) \omega dr + \sinh(r) d\omega, 
\end{eqnarray*}
giving the metric
\begin{eqnarray*}
ds^2 = dr^2 + \sinh^2 r d\omega^2
\end{eqnarray*}
from the standard Lorenzian metric restricted to $\HS^d$.  
Hence, we see
\begin{eqnarray}
\label{E:Delta-HS}
\Delta_{\HS^d} = \partial_r^2 + (d-1) \frac{\cosh r}{\sinh r} \partial_r + \frac{1}{\sinh^2 r} \Delta_{\SP^{d-1}}.
\end{eqnarray}

It is a standard exercise in differential geometry to show hyperbolic
space has constant sectional curvatures all equal to $-1$.

\subsection{Sketch of the proof}

In hyperbolic space, we have the conserved quantities
\begin{eqnarray*}
Q (u) = \int_{\HS^d} | u |^2 d \Omega
\end{eqnarray*}
and
\begin{eqnarray}
\label{E:hyp-E}
E(u) = \int_{\HS^d} \left( | \nabla_{\HS^d} u |^2 - \frac{2}{p+2} |u|^{p+2} \right) d\Omega,
\end{eqnarray}
and the approach of this paper is to realize solitons as minimizers
for certain constrained minimization problems related to these
quantities, and mimic the arguments used in the Euclidean setting.  

We use the polar representation of $\HS^d$, in which case the
hyperbolic Laplacian $\Delta_{\HS^d}$ can be conjugated to the
Euclidean Laplacian $\Delta_{\reals^d}$, modulo a potential term and
an angular offset term (see Section \ref{S:red-euc} below).  After
this conjugation, we are left with an equivalent optimization problem
in Euclidean space (see \eqref{eqn:E}).  This minimization problem has
an awkward angular term, so it is greatly simplified by assuming
spherical symmetry of a minimizing sequence.  To prove this
simplification is justified, we first prove any minimizing sequence of
the problem {\it in hyperbolic space} may be replaced by one that is spherically symmetric.  Conjugating the problem to Euclidean space amounts to replacing the
minimizing sequence with the sequence multiplied by a positive, radial
function, so conjugation {\it preserves the spherical symmetry}.  Then
we study the minimization problem in Euclidean space assuming
spherical symmetry, in which case it is equivalent to a minimization
problem with the standard Euclidean Laplacian.  Finally, that the
minimization problem may be reduced to the spherically symmetric case
follows from a rearrangement inequality from \cite{Dra} presented in Section \ref{sec:rad}.

\subsection{Acknowledgments}  
H. C. was partially supported by an NSF Postdoctoral Fellowship while in
residence at the Mathematical Sciences Research Institution (MSRI),
program ``Analysis on Singular Spaces''.  J.M. was partly supported by a National Science Foundation Postdoctoral Fellowship at Columbia University and partly by the Hausdorff Center for Mathematics at the University of Bonn.  The authors wish to thank Gigliola Staffilani, Herbert Koch, Michael Taylor, and Michael Weinstein for helpful conversations, as well as Vedran Sohinger and the reviewers for a careful reading of the draft and helpful suggestions. In addition, the second author would like to thank Jason Metcalfe and the University of North Carolina, Chapel Hill for graciously hosting him during part of this research.

\section{Previous Results}
\label{S:previous-results}

In this section, we summarize known results for soliton existence in $\RR^d$ and some of the recent work on HNLS.  

We first recall the relevant definitions for solitons in Euclidean
space.  Let $u(t,x)$ be a solution to the following Euclidean
nonlinear Schr\"odinger equation (NLS):
\begin{eqnarray}
\left\{ \begin{array}{c}
iu_t + \Delta_{\reals^d} u + f(|u|) u = 0, \ x \in \reals^d  \\
u(0,x) = u_0 (x),
\end{array} \right.
\label{eqn:nls}
\end{eqnarray}
where $\Delta_{\reals^d}$ is the (non-positive definite) Laplacian and 
\begin{eqnarray*}
f(s) = s^{p}
\end{eqnarray*}
for $\frac{4}{d-2} > p > 0$.

A soliton solution in Euclidean space is of the form 
\begin{eqnarray*}
u(t,x) = e^{i \lambda t} R_\lambda(x)
\end{eqnarray*} where 
$\lambda > 0$ and $R_\lambda (x)$ is a positive, spherically symmetric, exponentially decaying solution of the equation
\begin{eqnarray}
\label{eqn:sol}
\Delta R_\lambda - \lambda R_\lambda + f (R_\lambda ) R_\lambda = 0.
\end{eqnarray}
There are two conserved quantities for sufficiently regular solutions $u$ to NLS:
\begin{eqnarray*}
Q (u) := \int_{\reals^d} | u |^2 d x
\end{eqnarray*}
and
\begin{eqnarray*}
E(u) := \int_{\reals^d} \left( | \nabla u |^2 - F(|u|) \right) dx,
\end{eqnarray*}
where $F(|u| ) = | u |^{p+2}/(p+2)$ (more general nonlinearities can
also be considered by replacing $F$ with the integral of $f$; see
Section \ref{S:other-nl} for a discussion of other nonlinearities in the case of
hyperbolic space studied in this paper).  With this type of power
nonlinearity, soliton solutions exist and are known to be unique.
Existence of solitary waves for a wide variety of nonlinearities is
proved in \cite{BeLi} by {\it minimizing} the quantity
\begin{eqnarray*}
T(u) = \int_{\reals^d} |\nabla u|^2 dx
\end{eqnarray*}
with respect to the constraint
\begin{eqnarray*}
V(u) := -\frac{\delta^2}{2} \int_{\reals^d} |u|^2 dx + \int_{\reals^d} F(|u|) dx  = 1.
\end{eqnarray*}   
Then, using a minimizing sequence and Schwarz symmetrization, one sees the existence of the nonnegative, spherically symmetric, decreasing soliton solution.  For uniqueness, see \cite{Mc}, where a shooting method is implemented to show that the desired soliton behavior only occurs for one particular initial value.

An important fact for these soliton solutions is that $Q_{\lambda} = Q(R_{\lambda})$ and $E_{\lambda} = E(R_{\lambda})$ are differentiable with respect to $\lambda$.  This fact can be determined from the early works of Shatah, namely \cite{Sh1}, \cite{Sh2}.  By differentiating Equation \eqref{eqn:sol}, $Q$ and $E$ with respect to $\lambda$, we have
\begin{eqnarray*}
\partial_{\lambda} E_{\lambda} = - \lambda \partial_{\lambda} Q_{\lambda}.
\end{eqnarray*}

Variational techniques developed by \cite{W1} and \cite{W2} and generalized in \cite{GSS} and \cite{ShSt} tell us that when $\beta ( \lambda ) = E_{\lambda} + \lambda Q_{\lambda}$ is convex, or $\beta '' (\lambda) > 0$, we are guaranteed orbital stability as will be defined in the sequel (see Section \ref{sec:props}) under small perturbations, while for $\beta '' (\lambda) < 0$ we are guaranteed that the soliton is unstable under small perturbations.  

In this note we expand these results on bound states to $\HS^d$, following the work of Banica on well-posedness for focusing-HNLS in \cite{Ban1}.  The subsequent works of Banica-Carles-Staffilani (\cite{BCS}), Banica-Carles-Duyckaerts (\cite{BCD}), and Ionescu-Staffilani (\cite{IS}) study the questions of global well-posedness of the defocusing and focusing-HNLS, though methods there apply broadly to questions of local well-posedness.  In both the focusing and defocusing cases, the results parallel the Euclidean space results quite well, especially in dimension $3$.  We recall the results below only for $\HS^d$, but for a collection of comparable results in $\RR^d$, see the references contained within \cite{Ban1}, \cite{BCS}, \cite{IS} or a general presentation of the theory is done quite nicely in the book by Sulem-Sulem (\cite{SS}).

In \cite{Ban1}, the following theorem is proved, which states roughly
that well-posedness (existence, uniqueness and Lipschitz dependence
upon initial data) in $H^1$ and theory of blow-up (typically
classified as a singularity for the quantity $\int | \nabla u|^2 dx$
reached in finite time) for \eqref{eqn:hs} are comparable to the
results for the focusing, monomial nonlinear Schr\"odinger equation in
$\RR^d$.  Specifically, it is stated that for $p < \frac{4}{d}$, there
is global well-posedness and for $p \geq \frac{4}{d}$ there is local
well-posedness but also the possibility of finite time blow-up. These
statements are collected in the following theorem. 

\begin{thm}[Banica]
For $p < \frac{4}{d}$ the solutions to equation \eqref{eqn:hs} with $f(s) = s^{p}$ are global in $H^1 (\HS^d)$.  Global existence still holds for the power $p = \frac{4}{d}$ with initial data of mass smaller than a certain constant.

However, for $p \geq \frac{4}{d}$, blow-up solutions exist.  More precisely, if the initial data is radial and of finite variance
\begin{eqnarray*}
\int_{\HS^d} |u_0 (\Omega)|^2 \text{dist}^2 (0,\Omega) d\Omega < \infty
\end{eqnarray*}
and its energy satisfies
\begin{eqnarray*}
E(u_0) < c_d \| u_0 \|_{L^2}^2,
\end{eqnarray*}
then the solution blows up in finite time.  Here, $c_d$ is a geometric positive constant given by
\begin{eqnarray*}
c_d = \frac{ \inf \Delta^2_{\HS^d} \text{dist}(0,\cdot)}{16}.
\end{eqnarray*}
\end{thm}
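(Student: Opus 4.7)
I would split the argument into three stages: local well-posedness, global existence for the sub-critical and small-mass critical cases, and the virial-based blow-up for the super-critical case. Local well-posedness in $H^1(\HS^d)$ is obtained by a standard Picard iteration using Strichartz estimates for the propagator $e^{it\Delta_{\HS^d}}$; the hyperbolic setting actually affords slightly better dispersive decay than Euclidean space thanks to the spectral gap at $(d-1)^2/4$. The quantitative engine for both global results is a Gagliardo--Nirenberg inequality on $\HS^d$ of the form
\begin{equation*}
\int_{\HS^d} |u|^{p+2} \, d\Omega \leq C \, \|\nabla_{\HS^d} u\|_{L^2}^{dp/2} \|u\|_{L^2}^{p+2-dp/2},
\end{equation*}
which I would derive either by transferring the Euclidean inequality via the radial conjugation-to-Euclidean-space scheme foreshadowed in the sketch above, or by a symmetric-decreasing rearrangement on $\HS^d$.

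Global existence for $p < 4/d$ is then immediate: the gradient exponent $dp/2$ is strictly less than $2$, so substituting the inequality into the energy $E(u)$ and applying Young's inequality yields a time-independent a priori bound on $\|\nabla_{\HS^d} u(t)\|_{L^2}$ in terms of $Q(u_0)$ and $E(u_0)$, and the local theory iterates to all of $\RR$. At $p = 4/d$ the exponent is exactly $2$, so the same argument succeeds provided $C\|u_0\|_{L^2}^{4/d} < 1$, accounting for the small-mass statement and identifying the threshold constant as the sharp Gagliardo--Nirenberg constant on $\HS^d$.

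For blow-up I would use a variance-type quantity $V(t) = \int_{\HS^d} |u(t,\Omega)|^2 \, \psi(\text{dist}(0,\Omega)) \, d\Omega$ with $\psi$ a suitably chosen non-negative weight, tailored so that the curvature correction produces exactly the constant $c_d$ appearing in the statement. Differentiating $V$ twice using the equation and integrating by parts gives a hyperbolic analog of Glassey's virial identity; its new feature compared to the Euclidean case is a term of the form $-\int |u|^2 \, \Delta_{\HS^d}^2 \psi \, d\Omega$, which under the radial hypothesis is bounded below by $-16\, c_d\, \|u\|_{L^2}^2$ from the very definition of $c_d$. After rewriting the remaining Euclidean-type terms in $E(u_0)$, one reaches
\begin{equation*}
V''(t) \leq 16\, E(u_0) - 16\, c_d\, \|u_0\|_{L^2}^2 \quad \text{for } p \geq 4/d.
\end{equation*}
Under the energy hypothesis the right-hand side is a strictly negative constant, so the non-negative quantity $V(t)$ cannot persist for all time; the standard uncertainty inequality $\|u_0\|_{L^2}^2 \lesssim V(t)^{1/2} \|\nabla_{\HS^d} u(t)\|_{L^2}$ then forces the $H^1$ norm to blow up in finite time.

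The main obstacle, in my view, is the clean extraction of $c_d$ as a lower bound on $\Delta_{\HS^d}^2 \psi$, together with the role of the radial assumption on the data in converting the infimum defining $c_d$ into a usable pointwise estimate inside the integral. This reduces to an explicit computation in polar coordinates using \eqref{E:Delta-HS}, where one takes two successive Laplacians of the chosen distance weight and extracts the infimum of the result over $r \geq 0$. A secondary and more routine technical point is the rigorous justification of $V'$ and $V''$ at the $H^1$ level: this is standard and proceeds by approximation with smoother radial solutions whose variance is manifestly finite, followed by passage to the limit.
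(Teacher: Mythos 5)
First, a point of comparison: the paper does not prove this theorem at all --- it is quoted verbatim from Banica \cite{Ban1} as background, so there is no in-paper proof to match against. Your three-stage architecture (Strichartz local theory, Gagliardo--Nirenberg for the global/small-mass-critical cases, virial identity for blow-up) is exactly the architecture of the cited source, and the first two stages of your sketch are sound; the hyperbolic Gagliardo--Nirenberg inequality you invoke does hold, though it cannot be proved by scaling --- it follows from interpolating $L^{p+2}$ between $L^2$ and $L^{2d/(d-2)}$ together with the Sobolev inequality on $\HS^d$, or by the rearrangement/conjugation route you mention.

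There is, however, a genuine gap in your mechanism for the blow-up constant. You propose to extract $c_d$ from a lower bound on the bi-Laplacian term $-\int |u|^2\, \Delta_{\HS^d}^2\psi\, d\Omega$. But the quantity $\Delta^2_{\HS^d}\,\mathrm{dist}(0,\cdot)$ in the statement must be read as $\left(\Delta_{\HS^d}\, r\right)^2=(d-1)^2\coth^2 r$, whose infimum is $(d-1)^2$, giving $c_d=(d-1)^2/16>0$. The literal bi-Laplacian of the distance is $(d-1)(3-d)\coth r/\sinh^2 r$, whose infimum is $0$ for $d=2,3$ and $-\infty$ for $d\geq 4$, so the bound you propose yields no positive constant and the inequality $V''\leq 16E(u_0)-16c_d\|u_0\|_{L^2}^2$ would not follow from it. The constant actually enters through the uniform positivity of the \emph{first} Laplacian of the distance, $\Delta_{\HS^d} r=(d-1)\coth r\geq d-1$, appearing in the $\int \Delta\psi\,|u|^{p+2}$ term and the cross terms of the virial identity; this is precisely the feature with no Euclidean analogue (in $\RR^d$ one has $\Delta|x|=(d-1)/|x|\to 0$), and it is why the hyperbolic threshold is $E(u_0)<c_d\|u_0\|_{L^2}^2$ rather than the Euclidean $E(u_0)<0$. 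Relatedly, the radiality hypothesis is not there to convert the infimum into a pointwise bound: it is needed because $\mathrm{Hess}\,\psi(\nabla u,\nabla\bar u)=\psi''|\partial_r u|^2+\psi'\coth r\cdot|\nabla_\omega u|^2/\sinh^2 r$ for a radial weight, and the angular piece carries the factor $\coth r\geq 1$ that cannot be absorbed into $4\int|\nabla_{\HS^d}u|^2$ when reconstituting $16E(u_0)$; for radial data that piece simply vanishes. With these two corrections your plan coincides with Banica's proof.
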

This theorem allows for blow-up even for null energy solutions, which differs from the standard Glassey-type blow-up results for NLS on Euclidean space.

Recall that {\it scattering} to $u_\pm$ for a solution to HNLS means that such a solution $u(t,x)$ satisfies
\begin{eqnarray*}
\| u(t) - e^{i (t-t_0) \Delta_{\HS^d}} u_\pm \|_{L^2} \to 0 \ \text{as} \ t
\to \pm \infty. 
\end{eqnarray*}
The idea is that asymptotically the nonlinear problem is essentially
controlled by the linear component.  A {\it wave operator} $W_{\pm}$
is a well-defined map from the scattering data to data at time $t_0$:
\[
 W_{\pm} u_\pm = u(t_0).
\]
Wave operators are injective by uniqueness, and we say {\it
  asymptotic completeness} occurs if they are also surjective.
Obviously, this is not possible in the focusing case due to the
existence of bound states as described in this paper.

For the defocusing equation, we state the following theorems here as the techniques used to prove Theorem \ref{thm:bcs1} apply to the focusing problem when analyzing local well-posedness.  As a result, they are applicable when proving persistence of radiality for solutions of HNLS.

In \cite{BCS}, the following theorem about scattering for the defocusing, monomial nonlinear Schr\"odinger equation on $\HS^d$ is proved.  

\begin{thm}[Banica-Carles-Staffilani]
\label{thm:bcs1}
Let $d \geq 2$, $0 < p < \frac{4}{d}$ and $t_0 \in \RR$.  There exists
$\epsilon = \epsilon (d,p)$ such that if $\phi \in L^2_{r} (\HS^d)$
with $\| \phi \|_{L^2} < \epsilon$, then \eqref{eqn:hs} with $f(s) =
-s^{p}$ and $u|_{t = t_0} = \phi$ has a solution
\begin{eqnarray*}
u \in \mathcal{C} (\RR; L^2) \cap L^{2+p} (\RR \times \HS^d).
\end{eqnarray*}
Moreover, $\| u(t) \|_{L^2} = \| \phi \|_{L^2}$ for all $t \in \RR$.  There exist $u_{\pm} \in L^2_{r} (\HS^d)$ such that
\begin{eqnarray*}
\| u(t) - e^{i( t-t_0) \Delta_{\HS^d}} u_\pm \|_{L^2} \to 0 \ \text{as} \ t \to \pm \infty.
\end{eqnarray*}
If we take initial time $t_0 = -\infty$ (resp. $t_0 = \infty$), then
$u_- = \phi$
(resp. $u_+ = \phi$).
\end{thm}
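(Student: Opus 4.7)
The plan is to combine a Duhamel reformulation of the defocusing Cauchy problem with global Strichartz estimates for radial functions on $\HS^d$ and run a standard small-data contraction-mapping argument in an appropriate spacetime norm. Writing $S(t) = e^{it\Delta_{\HS^d}}$, we recast the equation as
\begin{equation*}
u(t) = S(t-t_0)\phi + i\int_{t_0}^{t} S(t-s)\bigl(|u|^p u\bigr)(s)\,ds \defeq \Phi(u)(t).
\end{equation*}
The goal is then to find a unique fixed point of $\Phi$ in the closed ball of radius $2C\|\phi\|_{L^2}$ in the space
\begin{equation*}
X = \mathcal{C}(\RR;L^2_r(\HS^d))\cap L^{2+p}(\RR\times \HS^d),
\end{equation*}
equipped with the norm $\|u\|_X = \|u\|_{L^\infty_t L^2_x} + \|u\|_{L^{2+p}_{t,x}}$.

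The key technical ingredient I would invoke (proved in \cite{BCS}) is a pair of Strichartz estimates on $\HS^d$ valid for radial data: a homogeneous bound $\|S(t)\phi\|_{L^{2+p}_{t,x}}\lesssim \|\phi\|_{L^2}$ and an inhomogeneous counterpart controlling the Duhamel term in $X$ by the $L^{(2+p)/(1+p)}_{t,x}$ norm of the forcing. Because $0<p<4/d$, H\"older's inequality on $\RR\times\HS^d$ yields
\begin{equation*}
\bigl\||u|^p u\bigr\|_{L^{(2+p)/(1+p)}_{t,x}} \leq \|u\|_{L^{2+p}_{t,x}}^{p+1},
\end{equation*}
so that $\|\Phi(u)\|_X \leq C\|\phi\|_{L^2} + C'\|u\|_X^{p+1}$, with an analogous difference estimate. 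For $\|\phi\|_{L^2} < \epsilon(d,p)$ sufficiently small, $\Phi$ contracts on the ball, producing the unique solution $u\in X$. Conservation of mass $\|u(t)\|_{L^2}=\|\phi\|_{L^2}$ then follows from the standard approximation argument in $H^1$ together with the $L^2$ density and the defocusing sign.

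For the scattering claim, the finite Strichartz norm $\|u\|_{L^{2+p}_{t,x}}<\infty$ is decisive. Setting $v_\pm(t) = S(-t)u(t)$, Duhamel gives for $t_2 > t_1$
\begin{equation*}
v_\pm(t_2)-v_\pm(t_1) = i\int_{t_1}^{t_2} S(-s)\bigl(|u|^p u\bigr)(s)\,ds,
\end{equation*}
and the dual Strichartz estimate bounds its $L^2$ norm by $C\|u\|_{L^{2+p}((t_1,t_2)\times\HS^d)}^{p+1}$, which tends to zero as $t_1,t_2\to\pm\infty$. Hence $v_\pm(t)$ is Cauchy, defining $u_\pm\in L^2_r(\HS^d)$ with the stated asymptotic property. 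The statements for $t_0=\pm\infty$ are treated symmetrically: solve for $u$ in $X$ by setting up the Duhamel equation with $S(t-t_0)\phi$ replaced by $S(t)\phi$ (the would-be asymptotic linear profile), and verify that the resulting fixed point scatters to $\phi$.

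The main obstacle in carrying this program through is establishing the radial Strichartz estimates on $\HS^d$: in this curved setting the Fourier analysis and dispersive bounds are considerably more delicate than in $\reals^d$, and one must exploit both the Kunze--Stein phenomenon (or explicit spectral calculus via the Plancherel formula on $\HS^d$) and radiality to make $(2+p,2+p)$ a globally admissible exponent for all $0<p<4/d$. Once this dispersive input is in hand, the remainder of the argument is a routine transplantation of the Euclidean small-data scattering theory.
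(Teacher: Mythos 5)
This statement is not proved in the paper at all: it is quoted verbatim from Banica--Carles--Staffilani \cite{BCS} as background in Section \ref{S:previous-results}, so there is no internal proof to compare against. Your outline is nonetheless the correct one and is, in essence, the argument of \cite{BCS}: Duhamel reformulation, a contraction in $\mathcal{C}(\RR;L^2_r)\cap L^{2+p}_{t,x}$ driven by global radial Strichartz estimates, mass conservation by $H^1$ approximation, and scattering from the finiteness of the global $L^{2+p}_{t,x}$ norm via the Cauchy criterion for $S(-t)u(t)$. The one place where all the real work lives is the point you yourself flag: the pair $(2+p,2+p)$ with $0<p<4/d$ is strictly sub-admissible in the Euclidean scaling sense (Euclidean admissibility would force $p=4/d$), so the needed homogeneous and inhomogeneous estimates have no $\RR^d$ analogue and must come from the weighted dispersive estimate for the radial propagator on $\HS^d$ (equivalently, the exponential volume growth / Kunze--Stein phenomenon). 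Since you import those estimates rather than prove them, your proposal is a correct reduction of the theorem to the genuinely hard input, which is exactly how the result is established in the cited reference.
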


Also in \cite{BCS}, the following theorem is proved.  
\begin{thm}
Let $d \geq 2$, $0 < p < \frac{4}{d-2}$ and $t_0 = -\infty$.  For any $\phi = u_- \in H^1_{r} (\HS^d)$, there exists $T < \infty$ such that the Cauchy integral formulation of \eqref{eqn:hs} with $f(s) = -s^{p}$ has a solution
\begin{eqnarray*}
u \in \mathcal{C} (\RR; L^2) \cap L^\infty ([-\infty, -T];H^1) \cap L^{2+p} ([-\infty,-T]; W^{1,p+2}).
\end{eqnarray*}
Moreover, this solution $u$ is defined globally in time; $u \in L^\infty (\RR;H^1)$.  That is, $u$ is the only solution to \eqref{eqn:hs} with $f(s) = -s^{p}$ such that
\begin{eqnarray*}
\| u(t) - e^{i (t-t_0) \Delta_{\HS^d}} u_- \|_{H^1} = \| e^{-i (t-t_0) \Delta_{\HS^d}} u(t) - u_- \|_{H^1} \to 0 \ \text{as} \ t \to \infty.
\end{eqnarray*}
\end{thm}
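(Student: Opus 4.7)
The plan is to construct the desired scattering solution by a fixed-point argument on a semi-infinite interval $(-\infty, -T]$ with $T$ taken large enough that the free Schr\"odinger evolution of $u_-$ has small Strichartz norm on that interval, and then to extend the solution globally in time by exploiting the conservation laws available in the defocusing setting. First, I would reformulate the asymptotic condition $\|u(t) - e^{it\Delta_{\HS^d}} u_-\|_{H^1} \to 0$ as the Cauchy integral identity
\begin{eqnarray*}
u(t) = e^{it\Delta_{\HS^d}} u_- - i \int_{-\infty}^{t} e^{i(t-s)\Delta_{\HS^d}} \bigl( |u|^p u \bigr)(s)\, ds,
\end{eqnarray*}
the improper integral being interpreted as a limit in a suitable Strichartz space. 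The main analytic input is the family of Strichartz estimates for $e^{it\Delta_{\HS^d}}$ established in Banica's work and exploited in \cite{BCS} and \cite{IS}, together with the Sobolev embeddings valid precisely in the energy-subcritical regime $p < 4/(d-2)$, which allow one to control $|u|^p u$ in $W^{1,p+2}$.

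Next, I would set the problem up in the Banach space
\begin{eqnarray*}
X_T = L^\infty\bigl((-\infty,-T]; H^1(\HS^d)\bigr) \cap L^{p+2}\bigl((-\infty,-T]; W^{1,p+2}(\HS^d)\bigr),
\end{eqnarray*}
and show that the right-hand side of the Duhamel identity defines a contraction on a small ball of $X_T$. The point is not to require smallness of $\|u_-\|_{H^1}$, which is arbitrary, but only smallness of $\|e^{it\Delta_{\HS^d}} u_-\|_{L^{p+2}((-\infty,-T]; W^{1,p+2})}$, which follows from the dispersive estimates on $\HS^d$ by taking $T$ sufficiently large. The nonlinear estimates for $|u|^p u - |v|^p v$ in the $W^{1,p+2}$ scale are then carried out in the usual Cazenave-Weissler manner via H\"older in time, Strichartz in space, and Sobolev embedding. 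Preservation of radial symmetry is automatic from the orthogonal invariance of $\Delta_{\HS^d}$ and the pointwise character of the nonlinearity, so the unique fixed point in $X_T$ indeed lies in $H^1_r$.

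Once a solution is produced on $(-\infty, -T]$, I would extend it to all of $\RR$ using the conservation of mass $Q(u) = \|u\|_{L^2}^2$ and of the defocusing energy
\begin{eqnarray*}
E(u) = \int_{\HS^d} \Bigl( |\nabla_{\HS^d} u|^2 + \tfrac{2}{p+2} |u|^{p+2} \Bigr) d\Omega,
\end{eqnarray*}
both of whose terms are nonnegative. Passing $t \to -\infty$ in the Duhamel identity and using dispersive decay of the nonlinear contribution identifies $Q(u) \equiv \|u_-\|_{L^2}^2$ and $E(u) \equiv \|\nabla_{\HS^d} u_-\|_{L^2}^2$, which together give a uniform $H^1$ bound on $u(t)$ along its maximal interval of existence. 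An $H^1$-subcritical local well-posedness statement (the $H^1$ analog of \thmref{thm:bcs1}, again powered by Strichartz on $\HS^d$) then yields the global bound $u \in L^\infty(\RR; H^1)$. Uniqueness among solutions satisfying the prescribed scattering condition follows from the same contraction estimate applied to the difference of any two candidates on $(-\infty, -T]$.

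The main obstacle I anticipate lies in the large-time dispersive decay of the linear flow in the chosen Strichartz norm, namely $\|e^{it\Delta_{\HS^d}} u_-\|_{L^{p+2}((-\infty,-T]; W^{1,p+2})} \to 0$ as $T \to \infty$ for a general $u_- \in H^1_r(\HS^d)$. This is where the hyperbolic geometry is essential: Banica's dispersive estimate is in fact stronger than its Euclidean counterpart, but one must still identify a Strichartz pair admissible at this level of regularity for the given nonlinearity, and verify that $e^{it\Delta_{\HS^d}} u_-$ lies in the corresponding global Strichartz space so that its tail at $-\infty$ really can be made small. Once this linear decay is pinned down, the remainder of the argument proceeds in close parallel to the defocusing NLS theory on $\RR^d$.
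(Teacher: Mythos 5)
This theorem is not proved in the paper at all: it is quoted verbatim from Banica--Carles--Staffilani \cite{BCS} as background in Section \ref{S:previous-results}, so there is no in-paper argument to compare against. That said, your outline is essentially the strategy actually used in \cite{BCS}: Duhamel formulation from $t=-\infty$, contraction on a semi-infinite interval $(-\infty,-T]$ with smallness supplied not by $\|u_-\|_{H^1}$ but by the tail of the free evolution in a global Strichartz norm, identification of the conserved mass and (sign-definite, defocusing) energy with $\|u_-\|_{L^2}^2$ and $\|\nabla_{\HS^d}u_-\|_{L^2}^2$ in the limit $t\to-\infty$, and global continuation from the resulting uniform $H^1$ bound via the subcritical local theory.

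The one place where your write-up is an outline rather than a proof is exactly the obstacle you flag, and it is worth being precise about why it is the crux. The pair $(q,r)=(p+2,p+2)$ appearing in the statement satisfies $\tfrac{2}{q}+\tfrac{d}{r}=\tfrac{d+2}{p+2}$, which equals the Euclidean-admissible value $\tfrac{d}{2}$ only at $p=\tfrac{4}{d}$; over the full range $0<p<\tfrac{4}{d-2}$ it is not a Euclidean Strichartz pair, so the ``Cazenave--Weissler'' scheme does not transplant verbatim. What rescues the argument is the radial weighted dispersive estimate on $\HS^d$ (free evolution decaying like $|t|^{-d/2}$ for small times but strictly faster than the Euclidean rate for large times, after conjugation by $(\sinh r/r)^{(d-1)/2}$), which enlarges the set of admissible exponents beyond the Euclidean line; combined with the one derivative of regularity and Sobolev embedding in the $W^{1,p+2}$ scale, this is what closes the contraction and also what makes $\|e^{it\Delta_{\HS^d}}u_-\|_{L^{p+2}((-\infty,-T];W^{1,p+2})}\to 0$ as $T\to\infty$ (by density of nice data in $H^1_r$ plus the global bound). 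Note that the radiality hypothesis $u_-\in H^1_r$ is therefore not a cosmetic convenience preserved by symmetry, as your proposal suggests, but the hypothesis under which the improved dispersive estimate of \cite{BCS} is proved; dropping it requires the later machinery of \cite{IS} and \cite{AnPi}. With that admissibility bookkeeping supplied, the rest of your argument (including the uniqueness claim, which follows from the contraction estimate applied to the difference of two solutions obeying the scattering condition on $(-\infty,-T]$ and then from local uniqueness forward in time) is sound and matches the cited proof.
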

\noindent In addition, the authors prove $H^1$ asymptotic completeness in the case $d=3$.

In \cite{IS}, the following theorem is proved.
\begin{thm}
Let $d \geq 2$, $0 < p < \frac{4}{d-2}$ and $q \in (2, (2d+4)/d]$ is fixed.

a.  If $u_0 \in H^1 (\HS^d)$ then there exists a unique global solution $u \in \mathcal{C} (\RR; H^1 (\HS^d))$ of \eqref{eqn:hs} with $f(s) = -s^{p}$.  In addition for $T \in [0,\infty)$, the mapping
\begin{eqnarray*}
u_0 \to e^{i T \Delta_{\HS^d}} (u_0) = 1_{(-T,T)} (t) \cdot u
\end{eqnarray*}
is a continuous mapping from $H^1$ to $S^1_q (-T,T)$ and the conservation laws are satisfied, where
\begin{eqnarray*}
S^1_q (I) = \left\{ f \in C(I:L^2 (\HS^d)) : \| f \|_{S^1_q} = \| (-\Delta_{\HS^d})^{\frac{1}{2}} (f) \|_{S^0_q (I)} < \infty \right\} , 
\end{eqnarray*}
\begin{eqnarray*}
S^0_q (I) = \left\{ f \in C(I:H^1 (\HS^d)) : \| f \|_{S^0_q} = \sup \left[ \| f \|_{L^{\infty,2}_I},  \| f \|_{L^{q,r}_I},  \| f \|_{L^{q,q}_I} \right] < \infty \right\} ,
\end{eqnarray*}
and
\begin{eqnarray*}
r = \frac{2 d q}{dq-4}.
\end{eqnarray*}

b.  Asymptotic completeness occurs in $H^1$.
\end{thm}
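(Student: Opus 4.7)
The plan is to follow the standard route for an energy-subcritical defocusing NLS, combining Strichartz estimates with a Morawetz-type estimate adapted to the hyperbolic geometry. First, I would establish dispersive and Strichartz estimates for the linear propagator $e^{it\Delta_{\HS^d}}$. Because of the exponential volume growth of $\HS^d$ the Schr\"odinger kernel on hyperbolic space actually enjoys a wider range of admissible pairs than in the Euclidean case; the parameter range $q\in(2,(2d+4)/d]$ with $r=2dq/(dq-4)$ is precisely the admissible range that arises from a spherical Plancherel / Fourier--Helgason analysis of the propagator. These estimates can be obtained either by stationary phase on the Fourier--Helgason side, or by transferring the Euclidean estimates via a heat-kernel comparison.

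With Strichartz estimates in hand, for local well-posedness in $H^1$ I would set up a contraction mapping in the space $S^1_q(I)$ on a short interval $I$, using the Duhamel formulation
\begin{equation*}
u(t)=e^{i(t-t_0)\Delta_{\HS^d}}u_0-i\int_{t_0}^{t}e^{i(t-s)\Delta_{\HS^d}}\bigl(|u|^p u\bigr)(s)\,ds.
\end{equation*}
Sobolev embedding on $\HS^d$ together with the subcriticality hypothesis $p<4/(d-2)$ controls the dual Strichartz norm of $|u|^{p}u$ by a power of $\|u\|_{S^1_q}$ strictly larger than $1$, yielding local existence, uniqueness and the claimed continuity of the data-to-solution map. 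Because the nonlinearity is defocusing, the energy $E(u)$ is coercive and, together with the mass $Q(u)$, provides a uniform $H^1$ bound; hence local solutions extend globally and the conservation laws are preserved by a limiting argument from smooth data. This settles part (a).

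Part (b), asymptotic completeness, is the main obstacle. The strategy is to prove a global space-time bound $\|u\|_{L^{q}_t L^{r}_x(\RR\times\HS^d)}<\infty$ depending only on the conserved quantities. In the Euclidean case this is done via (interaction) Morawetz inequalities; in hyperbolic space one can exploit the fact that the hyperbolic distance function from a fixed point is $\Delta_{\HS^d}$-convex (a consequence of the negative sectional curvature), which yields a Morawetz identity with a favorable sign and, if anything, stronger output than the Euclidean analogue. A single-particle Morawetz estimate should already suffice in the energy-subcritical range considered here, though an interaction Morawetz weighted by the hyperbolic distance between two points is a natural alternative. Once such a global bound is available, a standard Duhamel/perturbation argument shows that $e^{-it\Delta_{\HS^d}}u(t)$ is Cauchy in $H^1$ as $t\to\pm\infty$, producing scattering states $u_\pm$ and surjectivity of the wave operators. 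The delicate point is to make the Morawetz output interpolate cleanly with the Strichartz norms so as to close a continuity-in-time argument, which is precisely where the admissibility restrictions on $q$ and the formula for $r$ enter.
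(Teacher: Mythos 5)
This theorem is not proved in the paper at all: it is quoted verbatim from Ionescu--Staffilani \cite{IS} as background in Section 2 (``In \cite{IS}, the following theorem is proved''), so there is no internal proof to compare against. Your outline does correctly identify the strategy of \cite{IS} (and of the closely related \cite{BCS} and \cite{AnPi}): Strichartz estimates for $e^{it\Delta_{\HS^d}}$ over the stated, wider-than-Euclidean admissible range, a contraction argument in $S^1_q$ via Duhamel, conservation of mass and energy for global existence, and a Morawetz-type inequality exploiting the convexity properties of the hyperbolic distance function to obtain a global spacetime bound and hence scattering and asymptotic completeness.

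As a proof, however, your proposal is a roadmap rather than an argument: the two inputs that carry essentially all of the analytic content are asserted, not established. These are (1) the dispersive/Strichartz estimates on $\HS^d$ for the non-Euclidean range $q\in(2,(2d+4)/d]$, $r=2dq/(dq-4)$ (``stationary phase on the Fourier--Helgason side'' is the right headline, but the uniform-in-time kernel bounds and the endpoint issues are the whole point), and (2) a Morawetz or interaction-Morawetz estimate on $\HS^d$ whose output interpolates against Strichartz norms strongly enough to close the scattering argument for the \emph{entire} energy-subcritical range $0<p<4/(d-2)$ --- in the Euclidean setting this last step is already the hard part for $p$ near the endpoints, and your sketch acknowledges but does not resolve it. If the intent is to use this theorem as known input (as the paper does, e.g.\ for persistence of radiality in Lemma \ref{lem:rad}), the correct move is simply to cite \cite{IS} or \cite{AnPi} rather than to reprove it.
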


Similar scattering results are obtained in the concurrent work \cite{AnPi}.  It should be noted that in the case of defocussing nonlinearities in Euclidean space, scattering is only proved for $L^2$-supercritical but $H^1$-subcritical powers, making the result far stronger for defocussing HNLS.

\section{Radiality Assumption}
\label{sec:rad}

As mentioned in the introduction, the proof of Theorem
\ref{thm:main-cm} relies on conjugating $\Delta_{\HS^d}$ into an
operator on Euclidean space, and then finding minimizers for the
energy functional in \eqref{eqn:E}.  The problem of minimizing the functional \eqref{eqn:E} is greatly
simplified assuming the functions involved depend only on the radius
$r = |x|$, as then the minimization theory in $\RR^d$ may be used,
since the term involving the angular derivatives vanishes.  The purpose of this section is to justify such a
simplification.  
Let us define a space $H^1_r$ to be the space of all spherically
symmetric functions in $H^1$.

The next lemma shows that spherically symmetric initial data implies a
spherically symmetric solution to HNLS.

\begin{lem}
\label{lem:rad}
Let $u$ be a solution to \eqref{eqn:hs} with initial data $u_0 \in
H^1_r$ and the nonlinearity $f(|u|)u = |u|^p u$ with $\frac{4}{d-2} >
p > 0$.  Then $u \in H^1_r$.
\end{lem}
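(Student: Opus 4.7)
The plan is to exploit the rotational symmetry of the equation together with uniqueness of $H^1$ solutions. The uniqueness needed is precisely the one supplied by the local well-posedness results quoted earlier (the methods underlying Theorem \ref{thm:bcs1}, or equivalently Banica's $H^1$ theory, give local in time uniqueness for $\frac{4}{d-2}>p>0$ in $H^1(\HS^d)$, which is all that is required to propagate the symmetry).

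First I would observe that in polar coordinates the hyperbolic Laplacian
\begin{equation*}
\Delta_{\HS^d} = \partial_r^2 + (d-1)\frac{\cosh r}{\sinh r}\partial_r + \frac{1}{\sinh^2 r}\Delta_{\SP^{d-1}}
\end{equation*}
commutes with the natural action of $SO(d)$ on the angular variable $\omega\in \SP^{d-1}$, since $\Delta_{\SP^{d-1}}$ is rotation invariant and the $r$-part is unaffected by rotations. The nonlinearity $f(|u|)u = |u|^p u$ is pointwise in $u$ and hence trivially commutes with this action. Consequently, for any $R\in SO(d)$, if $u(t,r,\omega)$ solves \eqref{eqn:hs} then so does the rotated function
\begin{equation*}
u^{R}(t,r,\omega) := u(t,r,R\omega),
\end{equation*}
with initial data $u_0^{R}(r,\omega)=u_0(r,R\omega)$. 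Moreover, the map $u\mapsto u^R$ is an isometry of $H^1(\HS^d)$ (because the metric $ds^2=dr^2+\sinh^2 r\,d\omega^2$ is rotation invariant), so $u^R$ lies in the same functional class as $u$.

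Next, the hypothesis $u_0\in H^1_r$ means exactly that $u_0^{R}=u_0$ for every $R\in SO(d)$. Applying the uniqueness of $H^1$ solutions to \eqref{eqn:hs} (see \cite{Ban1,BCS,IS}) with common initial datum $u_0$, we conclude $u^{R}=u$ on the common interval of existence, i.e.\
\begin{equation*}
u(t,r,R\omega)=u(t,r,\omega)\qquad \text{for all }R\in SO(d),
\end{equation*}
for $t$ in the lifespan of $u$. This is precisely the statement $u(t,\cdot)\in H^1_r$. Since this identity is preserved at each step of the continuation, it holds on the whole interval of existence.

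There is no serious obstacle here; the only point to be careful about is to ensure the uniqueness statement applies in the regime under consideration, but this is covered by the local $H^1$ well-posedness theory recalled in Section \ref{S:previous-results} for all $\frac{4}{d-2}>p>0$. I would therefore simply quote the uniqueness theorem and carry out the symmetry-then-uniqueness argument as above.
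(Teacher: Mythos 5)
Your argument is correct and is exactly the one the paper intends: the paper's proof is the single remark that the lemma ``is by uniqueness,'' citing the local uniqueness from the Strichartz estimates of \cite{IS}, and your symmetry-plus-uniqueness argument is the standard way to fill that in. Nothing further is needed.
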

The proof of this lemma is by uniqueness, which follows from 
 the implicit local uniqueness following from the Strichartz estimates
 in \cite{IS}.

Given Lemma \ref{lem:rad}, we show that any minimizer of \eqref{E:hyp-E} may be replaced by one that is spherically symmetric, so that we may neglect the angular derivative.  To do this, we modify the standard
argument of \cite[Lemma 7.17]{LL} in $\RR^d$, using heat kernel arguments to show symmetric decreasing rearrangement or Schwarz symmetrization lowers the kinetic energy in $\HS^d$.  The symmetric decreasing rearrangement on $\HS^d$ is given by
\begin{eqnarray*}
f^* (\Omega) = \inf \{ t: \lambda_f (t) \leq \mu (B(\dist(\Omega, 0))) \},
\end{eqnarray*}
where $\mu$ is the natural measure on $\HS^d$, $\dist$ is the hyperbolic distance function on $\HS^d$ and
\begin{eqnarray*}
\lambda_f (t) = \mu ( \{ |f| > t \} ).
\end{eqnarray*}
First of all, it is clear $f^*$ is spherically symmetric, nonincreasing, lower semicontinuous and 
\begin{eqnarray*}
\| f^* \|_{L^p (\HS^d)} = \| f \|_{L^p (\HS^d)}
\end{eqnarray*}
for any $1 \leq p \leq \infty$.

\begin{lem}
\label{lem:rad1}
Suppose $f \in H^1( \HS^d)$, and $f^*$ is the symmetric decreasing
rearrangement of $f$.  Then
\be
\| \nabla f^* \|_{L^2(\HS^d)} \leq \| \nabla f \|_{L^2(\HS^d)}.
\ee
\end{lem}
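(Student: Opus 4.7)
The plan is to follow the strategy of \cite[Lemma~7.17]{LL} adapted to $\HS^d$, with the Euclidean heat semigroup replaced by the hyperbolic one. We may assume $f \geq 0$, since $|f|^* = f^*$ and $\|\nabla|f|\|_{L^2} \leq \|\nabla f\|_{L^2}$. The starting point is the Dirichlet-form identity
\[
\|\nabla f\|_{L^2(\HS^d)}^2 \;=\; \lim_{t \to 0^+}\frac{1}{t}\int_{\HS^d} f\,(f - e^{t\Delta_{\HS^d}}f)\,d\mu,
\]
valid for $f \in H^1(\HS^d)$ by general Beurling--Deny theory. Since $\|f\|_{L^2} = \|f^*\|_{L^2}$, it suffices to prove, for each fixed $t>0$, the semigroup monotonicity
\[
\int_{\HS^d} f\cdot e^{t\Delta_{\HS^d}} f\,d\mu \;\leq\; \int_{\HS^d} f^*\cdot e^{t\Delta_{\HS^d}} f^*\,d\mu,
\]
from which the lemma follows on dividing by $t$ and letting $t\to 0^+$.

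To prove this semigroup monotonicity, I would express the semigroup through its integral kernel,
\[
\int_{\HS^d} f\cdot e^{t\Delta_{\HS^d}} f\,d\mu \;=\; \int_{\HS^d}\int_{\HS^d} f(x)\,p_t(x,y)\,f(y)\,d\mu(x)\,d\mu(y),
\]
and invoke two standard facts about the hyperbolic heat kernel $p_t$: by the two-point homogeneity of $\HS^d$, $p_t(x,y)$ depends only on $\dist(x,y)$; and as a function of distance $p_t$ is strictly positive and strictly decreasing, as can be read off the explicit Gangolli--Millson formula for the heat kernel on a space of constant negative curvature. Granted these, the semigroup monotonicity is a particular case of a Riesz-type rearrangement inequality on $\HS^d$: for nonnegative measurable $f,g$ and any nonincreasing $K:[0,\infty)\to[0,\infty)$,
\[
\int_{\HS^d}\!\int_{\HS^d} f(x)\,K(\dist(x,y))\,g(y)\,d\mu(x)\,d\mu(y) \;\leq\; \int_{\HS^d}\!\int_{\HS^d} f^*(x)\,K(\dist(x,y))\,g^*(y)\,d\mu(x)\,d\mu(y).
\]

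The main obstacle is this Riesz-type inequality on $\HS^d$. It does \emph{not} reduce to a single-variable convolution by translation invariance, because the isometry group of $\HS^d$ is non-Abelian. The cleanest route I see is two-point symmetrization (polarization) with respect to totally geodesic hyperplanes in $\HS^d$: one shows that each polarization does not decrease the double integral above, and that iterated polarizations converge in $L^1$ to the symmetric decreasing rearrangement. This program, originating with Baernstein and refined by Brock--Solynin, adapts cleanly to any two-point homogeneous space, so it applies here. Once the rearrangement inequality is in hand, specialization to $K=\tilde p_t$ and $g=f$ closes the proof.
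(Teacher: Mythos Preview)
Your proposal is correct and follows essentially the same strategy as the paper: reduce to the heat-semigroup quadratic form via the Dirichlet-form identity, use that the hyperbolic heat kernel depends only on distance and is decreasing in that distance, and conclude by a Riesz-type rearrangement inequality on $\HS^d$. The only differences are in how the two auxiliary facts are sourced: for the monotonicity of $p_t$ in $\rho$ the paper gives a short self-contained proof from the Davies--Mandouvalos recursion relations rather than appealing to the Gangolli--Millson formula, and for the Riesz-type inequality the paper simply invokes Draghici's theorem (stated in the text as Theorem~\ref{Dra-thm}) rather than sketching the polarization argument you outline. Your polarization route is in fact the machinery underlying Draghici's result, so what you propose is a more self-contained version of the same proof; the paper's version is shorter because it offloads that work to a citation.
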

\begin{proof}
We use standard Hilbert space theory as in \cite{LL}.  Namely, we
observe that the kinetic energy satisfies
\begin{eqnarray*}
\| \nabla f \|_{L^2 (\HS^d)} = \lim_{t \to 0} I^t (f),
\end{eqnarray*}
where
\begin{eqnarray*}
I^t (f) = t^{-1} [ (f,f)_{\HS^d} - (f,e^{\Delta_{\HS^d} t} f)_{\HS^d}]
\end{eqnarray*}
and $(\cdot,\cdot)_{\HS^d}$ is the natural $L^2$ inner-product on $\HS^d$.
As $(f,f) = (f^*,f^*)$ by construction, we need
\begin{eqnarray*}
(f^*,e^{\Delta_{\HS^d} t} f^*)_{\HS^d} \geq (f,e^{\Delta_{\HS^d} t} f)_{\HS^d} 
\end{eqnarray*}
in order to see that symmetrization decreases the kinetic energy.
In $\RR^d$, this is done using convolution operators and the Riesz
rearrangement inequality, which we do not have here.  
Instead, we use Lemma \ref{HK-dec} and an application of Theorem \ref{Dra-thm} with $\Psi (f_1,f_2) = f_1 f_2$ and $K_{12} = p_d ( \rho, t)$ to finish the
proof of the lemma.
\end{proof}

\begin{lem}
\label{HK-dec}
For each $t>0$, the heat kernel on hyperbolic space, $p_d(\rho,t)$, is
a decreasing function of the hyperbolic distance $\rho$.
\end{lem}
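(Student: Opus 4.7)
The plan is to apply the parabolic maximum principle to the radial derivative $q(\rho,t) := \partial_\rho p_d(\rho,t)$. By rotational symmetry about the source, $p_d$ depends only on the hyperbolic distance $\rho$ and on time $t$, and by \eqref{E:Delta-HS} the hyperbolic Laplacian acts on radial functions as $\partial_\rho^2 + (d-1)\coth(\rho)\partial_\rho$, so that $p_d$ satisfies
$$\partial_t p_d = \partial_\rho^2 p_d + (d-1)\coth(\rho)\, \partial_\rho p_d$$
on $\{\rho > 0,\, t > 0\}$. Differentiating this equation in $\rho$ and using $\frac{d}{d\rho}\coth(\rho) = -1/\sinh^2(\rho)$, one finds that $q$ solves
$$\partial_t q = \partial_\rho^2 q + (d-1)\coth(\rho)\, \partial_\rho q - \frac{d-1}{\sinh^2\rho}\, q.$$
The key point is that the zeroth-order potential $-(d-1)/\sinh^2\rho$ is strictly negative, which is exactly the sign that rules out positive interior maxima of $q$.

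Next I would collect the boundary data for $q$. Smoothness of $p_d(\cdot,t)$ at the origin forces $q(0,t) = 0$ for $t > 0$; the standard Gaussian-type upper bound on the hyperbolic heat kernel gives $q(\rho,t) \to 0$ as $\rho \to \infty$ uniformly for $t$ in compact subsets of $(0,\infty)$; and for $t$ small and $\rho > 0$, the short-time Euclidean approximation $p_d(\rho,t) \sim (4\pi t)^{-d/2} e^{-\rho^2/(4t)}$ shows $q(\rho,t) < 0$. With these in hand, one argues by contradiction: if $q(\rho_*,t_*) > 0$ at some interior point, then on the truncated box $[0,R]\times[\epsilon,t_*]$, with $\epsilon$ small enough that $q(\cdot,\epsilon) \leq 0$ and $R$ large enough that $q(R,\cdot) \leq 0$, the positive maximum is attained at an interior point $(\rho_0,t_0)$. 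There $\partial_t q \geq 0$, $\partial_\rho q = 0$, and $\partial_\rho^2 q \leq 0$, so the evolution equation forces
$$0 \leq \partial_t q \leq -\frac{d-1}{\sinh^2\rho_0}\, q(\rho_0,t_0) < 0,$$
a contradiction. Hence $q \leq 0$ throughout, which is the claim that $p_d(\cdot,t)$ is a nonincreasing function of $\rho$.

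The main obstacle is the truncation step: one must verify that the known Gaussian-type upper bounds on $p_d$ itself transfer to $\partial_\rho p_d$ well enough that $q(R,t)$ is uniformly small as $R\to \infty$, so that the maximum of $q$ over the unbounded half-line is controlled by an interior truncation. This is standard (for example via Davies's parabolic estimates, or by differentiating the explicit Gaussian upper bound), but it is the one analytic ingredient that goes beyond the formal structure of the evolution equation. An alternative route would exploit the explicit Millson recursion $p_{d+2} = -(2\pi\sinh\rho)^{-1} e^{-dt}\, \partial_\rho p_d$ together with the closed-form $p_3(\rho,t) = (4\pi t)^{-3/2}(\rho/\sinh\rho)\, e^{-t-\rho^2/(4t)}$, which exhibits monotonicity dimension-by-dimension, but this requires a separate integral subordination formula for even $d$ and is comparatively cumbersome next to the uniform maximum-principle argument.
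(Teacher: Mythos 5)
Your argument takes a genuinely different route from the paper's. The paper's proof is precisely the recursion you set aside as the ``cumbersome alternative'': it quotes the Davies--Mandouvalos relations, in which $p_{d+1}(\sigma,t)$ is a negative constant multiple of $\partial_\sigma p_{d-1}(\sigma,t)$ with $\sigma=[\cosh(\rho/2)]^2$, together with nonnegativity of $p_d$ in every dimension; monotonicity is then immediate, with no PDE argument at all. Your maximum-principle proof is self-contained, avoids citing the explicit kernel structure, and would generalize to radially symmetric settings without closed-form kernels. The core computation is correct: differentiating the radial heat equation in $\rho$ produces the zeroth-order coefficient $-(d-1)/\sinh^2\rho$, which at an interior positive maximum of $q=\partial_\rho p_d$ gives the strict contradiction you describe (note the argument is vacuous for $d=1$, where the kernel is the explicit Gaussian anyway, since the potential term carries the factor $d-1$).

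There is, however, a genuine gap at the initial slice, which you treat more lightly than the truncation at $\rho=R$. You need $q(\cdot,\epsilon)\le 0$ on all of $(0,R]$ for some fixed $\epsilon<t_*$, but the statement ``for $t$ small and $\rho>0$ the short-time Euclidean approximation shows $q<0$'' is a pointwise limit at fixed $\rho$. Since $p_d(\cdot,t)\to\delta_0$ as $t\to 0$, there is no honest initial datum to start the comparison from, and upgrading the expansion $(4\pi t)^{-d/2}e^{-\rho^2/4t}\bigl(u_0(\rho)+O(t)\bigr)$ to a sign statement for $\partial_\rho p_d$ uniformly on $(0,R]$ requires control of derivatives of the remainder, not merely of $p_d$ itself; this is not weaker than the $\rho\to\infty$ issue you flag, it is the harder of the two. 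The standard repair is to prove instead that $e^{t\Delta_{\HS^d}}$ preserves the cone of nonnegative, radial, radially nonincreasing functions: run your maximum principle on $u(\cdot,t)=e^{t\Delta_{\HS^d}}f$ for smooth, compactly supported such $f$, where the slice $t=0$ is unproblematic, and then let $f$ tend to $\delta_0$. With that modification, and with the decay of $\partial_\rho p_d$ at spatial infinity that you already identify as the remaining analytic input, the argument closes.
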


\begin{proof}
This follows from Proposition 3.1 and the recursion relations in
Theorem 2.1 in \cite{DM}.  Specifically,
$\HS^1$ is isometric to $\reals$ with the metric $dx^2$, so the heat
kernels are the same:
\be
p_1(\rho,t) = (4 \pi t)^{-1/2} e^{-\rho^2/4t},
\ee
and we have the recurrence relations (see \cite[Theorem 2.1]{DM})
\ben
p_{d+1}( \sigma,t) & = & - (4 \pi)^{-1} \frac{\partial}{\partial \sigma}
p_{d-1}(\sigma,t) \text{ and } \label{pd-1} \\
\frac{1}{2} p_d (\sigma,t) & = &  \int_\sigma^\infty p_{d+1}( \lambda,t) (\lambda - \sigma )^{-1/2} d \lambda
, \label{pd-2}
\een
where $\sigma$ is related to the hyperbolic distance $\rho$ by 
\be
\sigma = [\cosh (\rho/2)]^2.
\ee
Since $\cosh$ is a monotone increasing function for $\rho \geq 0$, it suffices to prove
the lemma with $\sigma$ in place of $\rho$.  
Further, from \cite[Proposition 3.1]{DM}, we have $p_d \geq 0$ for $d$
odd, so \eqref{pd-2} implies $p_d \geq 0$ for $d$ even as well.  Then
\eqref{pd-1} indicates that the derivative of $p_{d-1}$ is a negative
multiple of $p_{d+1}$, and hence is negative.
\end{proof}

From \cite{Dra}, we have used the following theorem.

\begin{thm}[Draghici \cite{Dra}]
\label{Dra-thm}
Let $X = \HS^d$, $f_i : X \to \RR_+$ be $m$ nonnegative functions, $\Psi \in AL_2 (\RR^m_+)$ be continuous and $K_{ij}: [0,\infty) \to [0,\infty)$, $i < j$, $j \in \{ 1, \dots, m \}$ be decreasing functions.  We define
\begin{eqnarray*}
I[f_1, \dots, f_m] = \int_{X^m} \Psi (f_1 (\Omega_1), \dots, f_m (\Omega_m)) \Pi_{i<j} K_{ij} (d(\Omega_i, \Omega_j)  ) d\Omega_1 \dots d\Omega_m.
\end{eqnarray*}
Then, the following inequality holds:
\begin{eqnarray*}
I[f_1, \dots, f_m] \leq I[f_1^*, \dots, f_m^*].
\end{eqnarray*}
\end{thm}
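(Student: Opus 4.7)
The plan is to establish the inequality via the polarization (two-point rearrangement) method, which is the standard route to general Riesz-type rearrangement inequalities and which adapts to hyperbolic space because every symmetry needed is played by an isometric reflection. Fix a totally geodesic hyperplane $H\subset \HS^d$, let $\sigma_H$ denote the hyperbolic reflection across $H$, and let $H^+$ be one of the two half-spaces determined by $H$ (chosen to contain the base point $0$). For a nonnegative measurable $f$ define the polarization
\begin{eqnarray*}
f^H(\Omega)=\begin{cases}\max\{f(\Omega),f(\sigma_H\Omega)\},&\Omega\in H^+,\\ \min\{f(\Omega),f(\sigma_H\Omega)\},&\Omega\notin H^+.\end{cases}
\end{eqnarray*}
Because $\sigma_H$ is an isometry, polarization is measure-preserving and so preserves the layer-cake structure and every $L^p$-norm.

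The core step is the two-point inequality
\begin{eqnarray*}
I[f_1,\ldots,f_m]\;\leq\;I[f_1^H,\ldots,f_m^H].
\end{eqnarray*}
The key geometric input is that for any $\Omega_i,\Omega_j\in H^+$,
\begin{eqnarray*}
d(\Omega_i,\Omega_j)\leq d(\Omega_i,\sigma_H\Omega_j),
\end{eqnarray*}
with equality on the boundary; this is immediate from the fact that the geodesic between $\Omega_i$ and $\sigma_H\Omega_j$ must cross $H$ (an isometric reflection of hyperbolic space across a totally geodesic hyperplane still expands distances between points on the same side). Therefore each factor $K_{ij}(d(\Omega_i,\Omega_j))$ is largest when both arguments sit in $H^+$ and smallest when they lie in opposite half-spaces, since $K_{ij}$ is decreasing. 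Grouping the $2^m$ configurations of a tuple $(\Omega_1,\ldots,\Omega_m)$ under the action of $\sigma_H$ on each slot, the inequality reduces, on each orbit, to a purely algebraic two-point rearrangement inequality for $\Psi$ against a product of monotone weights. The hypothesis $\Psi\in AL_2(\RR^m_+)$ is precisely the statement that $\Psi$ satisfies this $2^m$-term (supermodularity-type) inequality, so the orbit-wise estimate holds and integration in $\Omega_1,\ldots,\Omega_m$ gives the polarization inequality.

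Next I would construct a countable family of hyperplanes $\{H_n\}$ through (or adapted to) the base point $0$ which is dense in the space of all oriented totally geodesic hyperplanes in the sense needed: for every nonnegative $f\in L^p(\HS^d)$ the iterated polarizations
\begin{eqnarray*}
f_N:=f^{H_{n_1}H_{n_2}\cdots H_{n_N}},
\end{eqnarray*}
with a suitable choice of the sequence $(n_k)$, converge to the symmetric decreasing rearrangement $f^*$ in $L^p(\HS^d)$. This is the hyperbolic analogue of the Brock--Solynin / Van Schaftingen density theorem; the argument is identical once one checks that the isometry group of $\HS^d$ is generated by reflections and acts transitively on geodesic hyperplanes through $0$, which it does. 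Monotonicity of the level sets under each polarization (together with the symmetric decreasing property being characterized by invariance under all polarizations about hyperplanes through $0$) gives the convergence.

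Finally, applying the polarization inequality $N$ times to $(f_1,\ldots,f_m)$ yields
\begin{eqnarray*}
I[f_1,\ldots,f_m]\leq I[(f_1)_N,\ldots,(f_m)_N],
\end{eqnarray*}
and passing $N\to\infty$ using dominated convergence (justified by the continuity of $\Psi$, the $L^p$-convergence of the iterates, and the integrability of the product $\prod_{i<j}K_{ij}(d(\Omega_i,\Omega_j))$ that must be assumed or verified for the right-hand side to be finite) produces the claim $I[f_1,\ldots,f_m]\leq I[f_1^*,\ldots,f_m^*]$. The main obstacle is the second step: proving the orbit-wise two-point inequality for the class $AL_2$ against a product rather than a single kernel requires an induction on the number of weights $K_{ij}$, peeling them off one at a time while preserving the supermodular structure; this inductive bookkeeping (and a careful treatment of equality on $H$) is where the argument in \cite{Dra} devotes most of its technical effort, and it is what has to be imported essentially verbatim.
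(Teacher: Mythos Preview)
The paper does not prove this theorem at all: it is quoted verbatim from \cite{Dra} and used as a black box (the paper says ``From \cite{Dra}, we have used the following theorem'' and then only applies it with $m=2$, $\Psi(f_1,f_2)=f_1f_2$, and $K_{12}$ the heat kernel). So there is no ``paper's own proof'' to compare against.

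That said, your sketch is an accurate outline of the polarization method that Draghici actually employs in \cite{Dra}: prove a two-point (polarization) inequality using that hyperbolic reflections are isometries and that distance across a totally geodesic hyperplane dominates distance on the same side, then iterate polarizations and pass to the limit to reach the symmetric decreasing rearrangement. Your identification of the main technical burden --- the orbit-wise supermodularity argument for $\Psi\in AL_2$ against a product of decreasing kernels, handled by induction on the number of kernel factors --- is also on target. One caution: the convergence step (iterated polarizations $\to f^*$) on $\HS^d$ is not literally the Euclidean Brock--Solynin statement and requires its own justification on the hyperbolic side; you correctly flag this but should not understate it. For the purposes of the present paper, however, none of this is needed beyond the citation.
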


\section{Reduction to an Euclidean Operator}
\label{S:red-euc}

In this section we begin to analyze HNLS in the case $f(|u|)u$ is a
so-called ``focusing'' nonlinearity.  From the polar form of $\Delta_{\HS^d}$, we approach the problem by comparison to the standard Laplacian on $\RR^d$.  In this direction, let us recall that the metric for $\RR^d$ in polar coordinates is given by
\begin{eqnarray*}
ds^2 = dr^2 + r^2 d\omega^2,
\end{eqnarray*}
so that the Jacobian is $r^{d-1}$.  Similarly, the Jacobian from the polar coordinate representation of $\HS^d$ is
$\sinh^{d-1} r$.  We employ an isometry $T$ taking $L^2(r^{d-1} dr
d\omega)$ to  $L^2(\sinh^{d-1} r dr d\omega)$, so that $T^{-1}
(-\Delta_{\HS^d}) T$ is a non-negative, unbounded, essentially self-adjoint operator on $L^2( \reals^d)$.

We define
\begin{eqnarray*}
\phi(r) = \left( \frac{r}{\sinh r} \right)^{\frac{d-1}{2}}, \\
\phi^{-1} (r) = \left( \frac{\sinh r}{r} \right)^{\frac{d-1}{2}},
\end{eqnarray*}
and take $Tu = \phi u$.  
Conjugating $-\Delta_{\HS^d}$ by $\phi$, we have a second order
differential operator on $\RR^d$ with the leading order term {\it
  almost} the Laplacian on $\RR^d$.  Indeed, we first calculate
\begin{eqnarray*}
\partial_r \phi & = & \frac{d-1}{2} \left( \frac{r}{\sinh r} \right)^{\frac{d-3}{2}} \left( \frac{\sinh r - r \cosh r}{\sinh^2 r } \right) , \\
\partial_r^2 \phi & = & \left( \frac{d-1}{2} \right) \left( \frac{d-3}{2} \right)  \left( \frac{r}{\sinh r} \right)^{\frac{d-5}{2}} \left( \frac{\sinh r - r \cosh r}{\sinh^2 r } \right)^2 \\
& + & \frac{d-1}{2} \left( \frac{r}{\sinh r} \right)^{\frac{d-3}{2}}  \left( \frac{2r \sinh r \cosh^2 r - 2 \sinh^2 r \cosh r -r \sinh^3 r}{\sinh^4 r } \right), 
\end{eqnarray*}
so that
\begin{eqnarray*}
\phi^{-1} (-\Delta_{\HS^d}) (\phi u ) & = & \phi^{-1} (-\partial_r^2 - (d-1) \frac{\cosh r}{\sinh r} \partial_r - \frac{1}{\sinh^2 r} \Delta_{\SP^{d-1}}) (\phi u) \\
& = & -\partial_r^2 u - 2 \phi^{-1} \partial_r \phi \partial_r u - \phi^{-1} \partial_r^2 \phi u - (d-1) \frac{\cosh r}{\sinh r} \partial_r u - (d-1) \frac{\cosh r}{\sinh r} \phi^{-1} \partial_r \phi u \\
&  & -\frac{1}{\sinh^2 r} \Delta_{\SP^{d-1}} u \\
& = & -\partial_r^2 u - \left(2 \phi^{-1} \partial_r \phi + (d-1) \frac{\cosh r}{\sinh r}\right) \partial_r u - \frac{1}{\sinh^2 r} \Delta_{\SP^{d-1}} u  \\
&  & -\left(\phi^{-1} \partial_r^2 \phi + (d-1) \frac{\cosh r}{\sinh r} \phi^{-1} \partial_r \phi \right) u \\
& = & -\partial_r^2 u + V_0(r) \partial_r u + \left[ V_d(r)  + \left( \frac{d-1}{2} \right)^2 \right] u - \frac{1}{\sinh^2 r} \Delta_{\SP^{d-1}} u \\
& = & -\tilde{\Delta} u + \left[ V_d(r)  + \left( \frac{d-1}{2} \right)^2 \right] u.
\end{eqnarray*}
Here
\begin{eqnarray*}
V_0(r) & = & \frac{1-d}{r}, \\
V_d (r) & = &  \left( \frac{d-1}{2} \right) \left( \frac{d-3}{2}
\right) \frac{1}{\sinh^2 r} - \left( \frac{d-1}{2} \right) \left( \frac{d-3}{2} \right) \frac{1}{r^2}  \\
& = & \frac{(d-1)(d-3)}{4} \left( \frac{r^2 - \sinh^2 r }{r^2 \sinh^2
    r}\right),
\end{eqnarray*}
so that
\be
- \tilde{\Delta} = - \Delta_{\RR^d} - \frac{r^2 - \sinh^2 r}{r^2
  \sinh^2 r} \Delta_{\SP^{d-1}}.
\ee

For completeness, we record the following
simple lemma.
\begin{lem}
The function 
\be
\tV =  \frac{\sinh^2 r - r^2}{r^2 \sinh^2 r}
\ee
satisfies the following properties: 
\be
&& \text{  (i) } \tV \in \Ci (\reals ), \\
&& \text{ (ii) } \tV \geq 0, \\
&& \text{(iii) } \tV(0) = \frac{1}{3}, \\
&& \text{ (iv) } \tV = \O( r^{-2}), \,\, r \to \infty, \text{ and} \\
&& \text{  (v) } \tV'(r) = 0 \text{ only at } r = 0.
\ee
\end{lem}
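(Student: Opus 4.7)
My plan is to first rewrite the quotient in the manifestly even form
\[
\tV(r) = \frac{1}{r^2} - \frac{1}{\sinh^2 r}.
\]
In this form items (ii)--(iv) fall out quickly. Non-negativity is the elementary inequality $\sinh r \geq r$ on $[0,\infty)$ combined with evenness. Decay at infinity is immediate: $1/\sinh^2 r$ is exponentially small, so $\tV(r) = r^{-2} + \O(e^{-2r})$. And the Taylor expansion $\sinh^2 r = r^2 + r^4/3 + \O(r^6)$ yields $\tV(0) = 1/3$.

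For (i), the only concern is smoothness at $r=0$, where the two terms individually blow up. The cleanest route is to use the fact that $\sinh r / r$ is a strictly positive entire function, so $r/\sinh r$ is smooth and positive on all of $\reals$. Writing
\[
\tV(r) = \frac{1}{r^2}\left( 1 - \Big(\frac{r}{\sinh r}\Big)^2\right),
\]
the bracketed factor has a zero of order two at $r=0$ with leading term $-r^2/3$, which cancels the $1/r^2$ and yields a smooth (even) function on all of $\reals$.

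The main obstacle is (v). Differentiating gives
\[
\tV'(r) = -\frac{2}{r^3} + \frac{2\cosh r}{\sinh^3 r} = \frac{2\,(r^3\cosh r - \sinh^3 r)}{r^3 \sinh^3 r},
\]
so since the denominator is positive for $r>0$, (v) reduces to proving the strict inequality $\sinh^3 r > r^3 \cosh r$ for $r > 0$. My plan is to take logarithms: setting $g(r) = \sinh^3 r / (r^3 \cosh r)$ one has $g(0^+) = 1$ and, after clearing denominators in $(\log g)'(r) = 3\coth r - 3/r - \tanh r$ using $3\cosh^2 r - \sinh^2 r = 3 + 2\sinh^2 r$,
\[
(\log g)'(r) = \frac{K(r)}{r \sinh r \cosh r}, \qquad K(r) = r\,(3 + 2\sinh^2 r) - 3\sinh r \cosh r.
\]
Then $K(0) = 0$ and a short computation (using $\cosh 2r = 1 + 2\sinh^2 r$ and $\sinh 2r = 2 \sinh r \cosh r$) gives the very clean identity
\[
K'(r) = 4 \sinh r \,(r\cosh r - \sinh r),
\]
which is strictly positive on $(0,\infty)$ by the elementary term-by-term inequality $r\cosh r > \sinh r$. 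Hence $K(r) > 0$ on $(0,\infty)$, so $\log g$ and therefore $g$ is strictly increasing there, giving $g(r) > 1$ and the desired strict inequality. Finally, $\tV'(0) = 0$ and the mirror statement on $r<0$ follow at once from the evenness of $\tV$ (oddness of $\tV'$), completing (v).
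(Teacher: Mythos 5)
Your argument is correct throughout. Parts (i)--(iv), handled via the partial-fraction form $\tV = r^{-2} - \sinh^{-2}r$, accomplish the same thing as the paper's brief appeal to Taylor expansions and $\sinh r \ge r$; the only blemish is a harmless sign slip in (i), where the bracketed factor $1-(r/\sinh r)^2$ has leading term $+r^2/3$, not $-r^2/3$ (consistent with your own computation $\tV(0)=1/3$). For (v) you take a genuinely different route. The paper rewrites the critical-point equation $\cosh r/\sinh^3 r = r^{-3}$ as $\sinh r\,\cosh^{-1/3}r = r$, differentiates the left-hand side, sets that derivative equal to $1$, and reduces to the cubic $8z^3-15z^2+6z+1=(z-1)^2(8z+1)$ in $z=\cosh^2 r$, whose only root with $z\ge 1$ is $z=1$, i.e.\ $r=0$. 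You instead prove the strict inequality $\sinh^3 r > r^3\cosh r$ for $r>0$ outright, by logarithmic differentiation of $g=\sinh^3 r/(r^3\cosh r)$ and two iterations of the ``vanishes at the origin, has positive derivative'' scheme; the key identity $K'(r)=4\sinh r\,(r\cosh r-\sinh r)$ checks out, as does the reduction $3\coth r-\tanh r=(3+2\sinh^2 r)/(\sinh r\cosh r)$. Your version is slightly stronger and arguably tighter: it yields $\tV'<0$ on all of $(0,\infty)$, i.e.\ strict monotonicity of $\tV$ (which is what the subsequent remark about the ``bump''/``well'' of $V_d$ actually uses), whereas the paper's argument as written only locates where the auxiliary derivative equals $1$ and needs one more word (checking its sign at a single point) to conclude that it exceeds $1$ throughout $(0,\infty)$. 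The paper's polynomial factorization is slicker once found; your monotonicity argument is more systematic and self-verifying.
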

\begin{rem}
Note that the potential $V_3 = 0$, and the lemma implies $V_2 \geq 0$
has a ``bump'' at $0$, while for $d \geq 4$, the 
potential $V_d \leq 0$ has a ``well'' at $0$. 
\end{rem}

\begin{proof}
Properties (i), (ii), and (iii) follow easily from Taylor expansions
and the fact that $\sinh r \geq r$ for $r \geq 0$.  To prove the only
critical point is the origin, we observe $\tV'(r) = 0$ if and only if
\be
\frac{\cosh r }{\sinh^3 r} = \frac{1}{r^3},
\ee
so we consider 
\be
\frac{ \sinh r}{ \cosh^{1/3} r} = r.
\ee
As this equation is satisfied for $r = 0$, if we can show the
derivative of the left hand side is greater than $1$ for $r >0$ we are
done.  Differentiating the left hand side, setting it equal to $1$ and
rearranging we have the equation
\be
8 \cosh^6 r - 15 \cosh^4 r + 6 \cosh^2 r +1 = 0.
\ee
Substituting $z = \cosh^2 r$, we have the third order polynomial
equation
\be
8 z^3 - 15 z^2 + 6 z +1 = 0,
\ee
which factorizes as
\be
(z-1)^2 (8z+1) = 0.
\ee
The only solutions to this satisfying $z = \cosh^2 r$ are $z = 1$,
since $\cosh r \geq 1$, and the only value of $r$ which satisfies this
is $r = 0$.  Hence the only critical point of $\tV$ is at $r=0$.
\end{proof}

After this conjugation to $\RR^d$, \eqref{eqn:hs} becomes
\begin{eqnarray}
\left\{ \begin{array}{c}
-iu_t - \tilde{\Delta} u +\frac{(d-1)^2}{4} u + V_d(x) u - \tf(x, u)  = 0, \ x \in \RR^d  \\
u(0,x) = u_0 (x) \in H^1,
\end{array} \right.
\label{nls:hs}
\end{eqnarray}
where now the nonlinearity $\tf$ takes the following form after
conjugation:
\be
\tf(x, u ) & = & \phi^{-1} f( \phi u ) (\phi u) \\
& = &   \left( \frac{r}{\sinh r} \right)^{\frac{p(d-1)}{2}}  | u |^p u .
\ee

We have the naturally defined conserved quantities
\begin{eqnarray*}
Q (u) = \| u \|_{L^2}^2
\end{eqnarray*}
and 
\begin{eqnarray}
E(u) = \int_{\RR^d} \left[ \frac{1}{2} | \nabla u |^2 + \frac{1}{2} a(x) | \nabla_{\text{ang}} u |^2 + \frac{1}{2} \left( V_d (|x|) + \frac{(d-1)^2}{4} \right)| u|^2 - F(x,u) \right] dx,
\label{eqn:E}
\end{eqnarray}
where
\begin{eqnarray*}
a(x)  =   \frac{|x|^2 - \sinh^2 |x| }{|x|^2 \sinh^2 |x|} 
\end{eqnarray*}
is the ``offset'' of the spherical Laplacian in the definition of
$\tilde{\Delta}$, 
\begin{eqnarray*}
F(x,u) & = & \int_0^{u(x)} \tf(x,s) ds \\
& = & \frac{1}{p+2} \left( \frac{|x|}{\sinh |x|} \right)^{(d-1)p/2}
|u|^{p+2} \\
& = : & K(|x|)  |u|^{p+2}.
\end{eqnarray*}
  %% NOTE (H): are these the correct
                          %% quantities for the non-radial problem?  (J) Yes.
From \cite{Ban1} (see Section \ref{S:previous-results}), we have global existence for $p < \frac{4}{d}$ and finite time blow-up for $\frac{4}{d} \leq p < \frac{4}{d-2}$ .

We make a soliton ansatz for \eqref{nls:hs} in $\RR^d$: $u(x,t) = e^{i \lambda t}
R_\lambda$, for a function $R_\lambda$ depending on a real parameter
(the soliton parameter) $\lambda >0$.  Plugging this ansatz into the
conjugated equation \eqref{nls:hs} we see we must have
\begin{eqnarray*}
-\tilde{\Delta} R_\lambda +   \left( \frac{(d-1)^2}{4} + \lambda +
  V_d(r) \right) R_\lambda - \tf (x,R_\lambda) = 0.
\end{eqnarray*}  
Hence, we seek a minimizer of the associated energy functional
\eqref{eqn:E} 
to this nonlinear elliptic equation for $\| u \|_{L^2}$ fixed.

We note that the continuous spectrum is ``shifted'' according to the term
\begin{eqnarray*}
 - \left( \frac{d-1}{2} \right)^2 u.
\end{eqnarray*}
In the end, this term does not alter the existence argument for soliton solutions, however, it does expand the allowed range of soliton parameters from $\lambda \in (0,\infty)$ to 
\begin{eqnarray*}
\lambda \in ( - \left( \frac{d-1}{2} \right)^2, \infty),
\end{eqnarray*}
so henceforward we set
\begin{eqnarray*}
\mu_d = \lambda + \left( \frac{d-1}{2} \right)^2 > 0.
\end{eqnarray*}

\section{Concentration compactness and existence of minimizers}
\label{S:conc-comp}

We recall the celebrated concentration compactness lemma of P.L. Lions, \cite{L1}:

\begin{lem}[Concentration Compactness]
\label{lem:cc}
Let $(\rho_n)_{n \geq 1}$ be a sequence in $L^1 (\RR^d)$ satisfying:
\begin{eqnarray*}
\rho_n \geq 0 \ \text{in $\RR^d$}, \ \int_{\RR^d} \rho_n dx = \delta
\end{eqnarray*}
where $\delta > 0$ is fixed.  Then there exists a subsequence $(\rho_{n_k})_{k \geq 1}$ satisfying one of the three following possibilities:

i. (compactness) there exists $y_k \in \RR^d$ such that $\rho_{n_k} ( \cdot + y_{n_k})$ is tight, i.e.:
\begin{eqnarray*}
\forall \epsilon > 0, \ \exists R < \infty, \ \int_{y_k + B_R} \rho_{n_k} (x) dx \geq \delta - \epsilon;
\end{eqnarray*}

ii. (vanishing) $\lim_{k \to \infty} \sup_{y \in \RR^d} \int_{y + B_R} \rho_{n_k} (x) dx = 0$, for all $R < \infty$; 

iii. (dichotomy) there exists $\alpha \in [0,\delta]$ such that for all $\epsilon > 0$, there exists $k_0 \geq 1$ and $\rho_k^1, \rho_k^2 \in L^1_+ (\RR^d)$ satisfying for $k \geq k_0$:
\begin{eqnarray*}
\left\{ \begin{array}{c}
\| \rho_{n_k} - (\rho_k^1 + \rho_k^2) \|_{L^1} \leq \epsilon, \ | \int_{\RR^d} \rho_k^1 dx - \alpha | \leq \epsilon, \ | \int_{\RR^d} \rho_k^2 dx - (\delta -\alpha) | \leq \epsilon \\
d( \text{Supp} (\rho_k^1), \text{Supp} (\rho_k^2) ) \to \infty.
\end{array} \right. 
\end{eqnarray*}
\end{lem}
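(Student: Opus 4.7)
The plan is to follow P.\,L.~Lions' original strategy, based on the Lévy concentration function
$$
Q_n(t) = \sup_{y \in \RR^d} \int_{y + B_t} \rho_n(x)\, dx, \qquad t \geq 0.
$$
For each $n$, $Q_n$ is nondecreasing, nonnegative, and bounded above by $\lambda$, with $\lim_{t \to \infty} Q_n(t) = \lambda$. First I would apply a Helly-type selection argument: since $\{Q_n\}$ is a sequence of uniformly bounded nondecreasing functions on $[0,\infty)$, I can pass to a subsequence (still denoted $n_k$) so that $Q_{n_k}(t) \to Q(t)$ at every continuity point of some nondecreasing limit $Q$ taking values in $[0,\lambda]$. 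Set
$$
\alpha = \lim_{t \to \infty} Q(t) \in [0,\lambda].
$$
The trichotomy then corresponds to the three cases $\alpha = 0$, $\alpha = \lambda$, and $0 < \alpha < \lambda$.

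The case $\alpha = 0$ is immediate: for any fixed $R$, $Q_{n_k}(R) \to 0$, which is precisely the vanishing alternative (ii). For $\alpha = \lambda$, I would pick, for each $\varepsilon > 0$, a radius $R_\varepsilon$ so large that $Q(R_\varepsilon) > \lambda - \varepsilon/2$, and then choose $y_{n_k}$ realizing (up to $\varepsilon/4$) the supremum defining $Q_{n_k}(R_\varepsilon)$. For $k$ large enough this gives $\int_{y_{n_k} + B_{R_\varepsilon}} \rho_{n_k} \geq \lambda - \varepsilon$, yielding tightness after translation, which is alternative (i). A standard diagonal extraction over $\varepsilon = 1/j$ produces a single sequence of centers $y_k$ that works simultaneously for all $\varepsilon$.

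The heart of the argument, and the main obstacle, is the dichotomy case $0 < \alpha < \lambda$. Here I would fix $\varepsilon > 0$ small and select a large radius $R$ with $Q(R) > \alpha - \varepsilon/2$, together with points $y_k$ (obtained as above) satisfying $\int_{y_k + B_R} \rho_{n_k} \geq \alpha - \varepsilon$ for all large $k$. The delicate step is to produce an outer radius $R_k \to \infty$ such that the annulus $y_k + (B_{R_k} \setminus B_R)$ carries asymptotically negligible mass; this follows by noting that, because $Q_{n_k}(R') \to Q(R') \leq \alpha$ at continuity points $R' \geq R$, I can iteratively enlarge $R$ while keeping the enclosed ball mass from exceeding $\alpha + \varepsilon$. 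With such $R_k$ in hand, I define
$$
\rho_k^1 = \chi_k \, \rho_{n_k}, \qquad \rho_k^2 = (1 - \tilde\chi_k) \, \rho_{n_k},
$$
where $\chi_k$ and $\tilde\chi_k$ are cutoffs supported in $y_k + B_R$ and $y_k + B_{R_k}$ respectively, with $\chi_k \leq \tilde\chi_k$ and $\dist(\supp \chi_k, \supp(1-\tilde\chi_k)) \to \infty$. The mass estimates
$$
\Bigl| \int \rho_k^1 - \alpha \Bigr| \leq \varepsilon, \qquad \Bigl| \int \rho_k^2 - (\lambda - \alpha)\Bigr| \leq \varepsilon, \qquad \|\rho_{n_k} - (\rho_k^1 + \rho_k^2)\|_{L^1} \leq \varepsilon
$$
then follow from the choice of $R$ and the annular smallness. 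The main technical difficulty lies precisely in choosing the pair $(R,R_k)$ simultaneously for all $\varepsilon$ via a careful diagonal extraction, since Helly convergence only gives pointwise control of $Q_{n_k}$ at continuity points of $Q$.
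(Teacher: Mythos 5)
The paper does not prove this lemma at all: it is quoted verbatim as a known result of P.~L.~Lions, with a citation to \cite{L1}, so there is no in-paper argument to compare against. Your reconstruction via the L\'evy concentration function $Q_n(t)=\sup_y\int_{y+B_t}\rho_n$, Helly selection, and the trichotomy on $\alpha=\lim_{t\to\infty}Q(t)$ is exactly Lions' original proof, and the case analysis you give (vanishing for $\alpha=0$, tightness for $\alpha=\lambda$, the annular cutoff construction for $0<\alpha<\lambda$) is correct in substance. The one step you wave at --- producing a \emph{single} sequence of centers $y_k$ valid for all $\varepsilon$ in the compactness case --- deserves the standard one-line justification: once $\int_{y+B_R}\rho_{n_k}>\lambda/2$ and $\int_{y'+B_{R'}}\rho_{n_k}>\lambda/2$, the two balls must intersect because the total mass is $\lambda$, so all admissible centers for small enough $\varepsilon$ lie within a bounded distance of one another and one may fix the centers attached to any $\varepsilon_0<\lambda/2$ and simply enlarge $R$. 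With that observation supplied, the diagonal extractions you describe (both there and in choosing the outer radii $R_k$ through continuity points of $Q$) go through without difficulty.
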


We want to apply this in the setting of hyperbolic solitons.  We have
reduced the problem to minimizing energy functionals on $\RR^d$ with
the addition of an angular derivative term and a potential.  However,
we have also seen that any minimizer must be radial, hence the angular
term will vanish.  That means we are left with a minimization problem
with potential on $\RR^d$, for which there is a theory.  We summarize
the basic technique, then we'll indicate how to apply it in the
present setting.
   
To begin, let us look at the basic energy functionals
\begin{eqnarray*}
{\mathcal E}_1 (u) = \int_{\RR^d} \left[ \frac{1}{2} | \nabla u |^2 + \frac{1}{2} c_1(x) |u|^2 - F(x,u) \right] dx,
\end{eqnarray*}
where $F(x,t) = \int_0^t f(x,s) ds$ and $f$ is the nonlinearity and
\begin{eqnarray*}
{\mathcal E}_2 (u) = \int_{\RR^d} \left[ \frac{1}{2} | \nabla u |^2 + \frac{1}{2} c_2(x)| u|^2  \right] dx.
\end{eqnarray*}
Define
\begin{eqnarray*}
I_\delta = \inf \{ {\mathcal E}_1 (u) : u \in H^1 (\RR^d), \| u \|_{L^2}^2 = \delta \}
\end{eqnarray*}
and 
\begin{eqnarray*}
J_\delta = \inf \{ {\mathcal E}_2 (u) : u \in H^1 (\RR^d), \int F(x,u) dx = \delta \}.
\end{eqnarray*}

For $\mathcal{E}_1$, we assume 
\begin{eqnarray}
\label{eqn:E1assump1}
\left\{ \begin{array}{c}
c_1^+ \in L^1_{\text{loc}}, \ \forall \delta > 0, \ c_1^+ 1_{(c_1 \geq \delta)} \in L^{q_1} \ \text{with} \ \max \left\{ \frac{d}{2}, 1 \right\} \leq q_1 < \infty, \\
c_1^- \in L^{q_2} + L^{q_3} \ \text{with} \ \max \left\{ \frac{d}{2}, 1
\right\} \leq q_2, q_3 < \infty
\end{array} \right. 
\end{eqnarray}
and
\begin{eqnarray}
\label{eqn:E1assump2}
\left\{  \begin{array}{c}
f(x,t) \in {\mathcal C} (\RR^d \times \RR), \ f(x,t) \to \bar{f} (t) \ \text{as} |x| \to \infty \ \text{uniformly for $t$ bounded}, \\
\lim_{|t| \to 0} F(x,t) t^{-2} = 0, \ \lim_{|t| \to \infty} F(x,t) |t|^{-d^*} = 0 \ \text{uniformly in $x \in \RR^d$},
\end{array} \right.
\end{eqnarray}
with 
\begin{eqnarray*}
d^* = \frac{2d+4}{d}. 
\end{eqnarray*}

For $J_\delta$, we assume $0 < p < \frac{4}{d-2}$, the integrated
nonlinearity takes the form
\be
F(x,u) = K(x) | u |^{p+2},
\ee
and
\begin{eqnarray}
\label{eqn:E2assump}
\left\{ \begin{array}{c}
c_2^+ \in L^1_{loc}, \ c_2^- \in L^q \ \text{for $\frac{d}{2} \leq q < \infty$}, \ c_2^+ \to \bar{c_2} > 0 \ \text{as $|x| \to \infty$}, \\ 
\exists \nu > 0, \ \forall u \in \mathcal{S}, \ \mathcal{E}_2 (u) \geq \nu \| u \|_{H^1}^2, \\
K \in C (\RR^d), \ K^+ > 0 \ \text{for some $x \in \RR^d$}, \ K \to
\bar{K} \in \reals \ \text{as $|x| \to \infty$}.
\end{array} \right.
\end{eqnarray}

We define
\begin{eqnarray*}
I_\delta^\infty = \inf \{ {\mathcal E}^\infty_1 (u) : u \in H^1 (\RR^d), \| u \|_{L^2}^2 = \delta \},
\end{eqnarray*}
where
\begin{eqnarray*}
{\mathcal E}^\infty_1 (u) = \int_{\RR^d} \left[ \frac{1}{2} | \nabla u |^2  - \bar{F}(u) \right] dx,
\end{eqnarray*}
for
\be
\bar{F} = \int_0^t \bar{f} (s) ds.
\ee
Define $J_\delta^\infty$ in a similar fashion with the convention that if $\bar{K} = 0$, $J_\delta^\infty = \infty$. 
Then, we can state the following theorems due to Lions in \cite{L2}.

\begin{thm}[Lions]
\label{thm:lions1}
The strict subadditivity inequality
\begin{eqnarray}
\label{eqn:sub}
I_\delta < I_\alpha + I^\infty_{\delta-\alpha}, \ \forall \alpha \in [0,\delta)
\end{eqnarray}
is a necessary and sufficient condition for the relative compactness in $H^1$ of all minimizing sequences of $I_\delta$.  In particular, if the subadditivity property holds, there exists a minimum for $I_\delta$.
\end{thm}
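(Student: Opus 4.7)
The plan is to deduce the theorem from the concentration-compactness lemma (Lemma \ref{lem:cc}) applied to the density $\rho_n = |u_n|^2$ associated with a minimizing sequence $(u_n)\subset H^1(\RR^d)$ for $I_\lambda$, together with standard auxiliary facts: under hypotheses \eqref{eqn:E1assump1}--\eqref{eqn:E1assump2}, any such minimizing sequence is bounded in $H^1$ (this follows because the potential and the nonlinear part can be controlled by $\|\nabla u\|_{L^2}^2$ up to a fraction, using the $L^{q_2}+L^{q_3}$ hypothesis on $c_1^-$ and the sub-$H^1$-critical growth on $F$), and the functional $\mathcal{E}_1$ is weakly lower semicontinuous on $H^1$.

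For sufficiency, I pass to a subsequence and invoke Lemma \ref{lem:cc}. First I rule out \emph{vanishing}: if $\sup_{y}\int_{y+B_R}|u_{n_k}|^2\,dx\to 0$ for every $R$, a standard interpolation argument (the classical Lions lemma on vanishing sequences in $H^1$) yields $u_{n_k}\to 0$ in $L^q$ for every $2<q<2^*$. Combined with the growth assumption on $F$ in \eqref{eqn:E1assump2} this forces $\int F(x,u_{n_k})\,dx\to 0$, so $I_\lambda\geq 0$. But the hypothesis $I_\lambda<I_0+I^\infty_\lambda=I^\infty_\lambda$ (taking $\alpha=0$), together with the fact that $I^\infty_\lambda\leq 0$ (obtained by rescaling a smooth bump to have $L^2$-mass $\lambda$ and sending the scale to exploit the superquadratic $\bar F$), gives $I_\lambda<0$, a contradiction. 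Next I rule out \emph{dichotomy}: applying the lemma with threshold $\alpha\in(0,\lambda)$ produces cutoffs $u_n^{(1)},u_n^{(2)}$ with $\|u_n^{(j)}\|_{L^2}^2\to\alpha$, $\lambda-\alpha$ respectively, with disjoint supports separating to infinity. The key technical step is to show
\begin{equation*}
\mathcal{E}_1(u_n)\geq \mathcal{E}_1(u_n^{(1)})+\mathcal{E}_1(u_n^{(2)})-o(1),
\end{equation*}
which requires care at the cutoff region (controlled by the decay properties in \eqref{eqn:E1assump1}) and uses that $F$ is superquadratic so that cross terms vanish. Because the support of $u_n^{(2)}$ drifts to infinity, the assumption $f(x,t)\to\bar f(t)$ uniformly in $t$ on bounded sets lets me replace $\mathcal{E}_1(u_n^{(2)})$ by $\mathcal{E}_1^\infty(u_n^{(2)})+o(1)$, yielding $I_\lambda\geq I_\alpha+I^\infty_{\lambda-\alpha}$ (after slightly adjusting $L^2$ masses, which costs only $o(1)$ by continuity of $\alpha\mapsto I_\alpha$). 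This contradicts \eqref{eqn:sub}. Therefore compactness holds: there exists $y_{n_k}\in\RR^d$ and $u\in H^1$ with $u_{n_k}(\cdot+y_{n_k})\rightharpoonup u$ weakly and $|u_{n_k}(\cdot+y_{n_k})|^2\to|u|^2$ tightly, hence strongly in $L^2$, and interpolating with the $H^1$ bound gives strong convergence in $L^{p+2}$; weak lower semicontinuity of the gradient term then forces $\mathcal{E}_1(u)\leq \liminf \mathcal{E}_1(u_{n_k})=I_\lambda$, so $u$ is a minimizer and, in fact, $u_{n_k}(\cdot+y_{n_k})\to u$ strongly in $H^1$.

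For necessity, I argue by contrapositive: if \eqref{eqn:sub} fails at some $\alpha\in(0,\lambda)$, then $I_\lambda=I_\alpha+I^\infty_{\lambda-\alpha}$ (the reverse inequality always holds by a gluing construction). I pick near-minimizers $v\in H^1$ for $I_\alpha$ and $w\in H^1$ for $I^\infty_{\lambda-\alpha}$ with compact support, and form $u_n=v+w(\cdot-y_n)$ with $|y_n|\to\infty$ so the supports are disjoint; this is a minimizing sequence for $I_\lambda$ that is manifestly non-relatively-compact in $H^1$ (no translate can make both pieces simultaneously tight). If $\alpha=0$ or $\alpha=\lambda$, the same obstruction is produced by translating a near-minimizer of $I^\infty_\lambda$, respectively of $I_\lambda$ itself, out to infinity.

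The main obstacle is the dichotomy step: splitting the functional $\mathcal{E}_1$ across the two pieces requires a careful cutoff and an estimate showing the cross terms (both from the potential $c_1$ and from the nonlinearity $F$) vanish in the limit, and then replacing the ``far'' energy by its asymptotic counterpart $\mathcal{E}_1^\infty$ using the uniform convergence $f(x,\cdot)\to\bar f(\cdot)$; everything else is a relatively routine combination of the concentration-compactness lemma, Rellich compactness after translation, and weak lower semicontinuity.
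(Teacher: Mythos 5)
First, a point of order: the paper offers no proof of this statement. Theorem \ref{thm:lions1} is quoted from Lions \cite{L2} and used as a black box; the only argument of this type actually written out in the paper is the existence of a minimizer for $J_\lambda$ in Section \ref{S:conc-comp}, which follows the same scheme you propose. So there is no ``paper's proof'' to compare against. Your proposal is, in substance, a sketch of Lions' original argument, and its architecture is the right one: apply Lemma \ref{lem:cc} to $\rho_n=|u_n|^2$, rule out vanishing via the Lions $L^q$-lemma together with $I_\lambda<I^\infty_\lambda\le 0$, rule out dichotomy via an energy-splitting estimate with vanishing cross terms, and, for necessity, glue near-minimizers of $I_\alpha$ and $I^\infty_{\lambda-\alpha}$ with diverging supports.

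There is one genuine gap in your compactness step. The functional $\mathcal{E}_1$ is \emph{not} translation invariant (both $c_1$ and $F(x,\cdot)$ depend on $x$), so from $u_{n_k}(\cdot+y_{n_k})\rightharpoonup u$ you may not write $\liminf\mathcal{E}_1\bigl(u_{n_k}(\cdot+y_{n_k})\bigr)=\liminf\mathcal{E}_1(u_{n_k})=I_\lambda$, as your sketch implicitly does. You must first show the concentration points $y_{n_k}$ remain bounded: if $|y_{n_k}|\to\infty$, the tight translated sequence sees only the limiting functional and one obtains $I_\lambda\ge I^\infty_\lambda$, contradicting the $\alpha=0$ instance of \eqref{eqn:sub}; only then does Rellich compactness on balls plus weak lower semicontinuity give a minimizer. (This is precisely the point the paper does spell out in its $J_\lambda$ argument, where $|y_k|$ is shown to be bounded before the limit is extracted.) Two smaller remarks: in the dichotomy step both pieces may escape to infinity, so you also need the elementary inequality $I_\beta\le I^\infty_\beta$ to reduce every subcase to a violation of \eqref{eqn:sub}; and the range $\alpha\in[0,\lambda]$ in the statement should be read as $\alpha\in[0,\lambda)$, since $\alpha=\lambda$ would assert $I_\lambda<I_\lambda$.
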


\begin{thm}[Lions]
\label{thm:lions2}
Assume \eqref{eqn:E1assump1} and \eqref{eqn:E1assump2} hold.  If $F(x,t) = K(x) |t|^{p_1}$ with $K \in \mathcal{C} (\RR^d)$, $K(x) \to 0$ as $|x| \to \infty$ and $0<p<\frac{4}{d}$, then \eqref{eqn:sub} holds if and only if $I_\delta < 0$.
\end{thm}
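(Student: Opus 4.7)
The plan is to exploit the hypothesis $K(x) \to 0$ as $|x|\to\infty$, which collapses the ``at infinity'' functional: since $\bar{F}\equiv 0$, one has ${\mathcal E}_1^\infty(u)=\tfrac{1}{2}\int|\nabla u|^2\ge 0$, and the dilation $u_R(x):=R^{-d/2}u(x/R)$ preserves $\|u\|_{L^2}^2$ while sending $\int|\nabla u_R|^2=R^{-2}\int|\nabla u|^2\to 0$ as $R\to\infty$. Hence $I_\mu^\infty=0$ for every $\mu\ge 0$, and the subadditivity \eqref{eqn:sub} reduces to the single statement $I_\lambda<I_\alpha$ for every $\alpha\in[0,\lambda)$. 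The direction $(\Rightarrow)$ now follows at once by setting $\alpha=0$: since $I_0={\mathcal E}_1(0)=0$, one reads off $I_\lambda<0$.

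For the converse, suppose $I_\lambda<0$; I need $I_\lambda<I_\alpha$ for every $\alpha\in(0,\lambda)$. If $I_\alpha\ge 0$ the inequality is immediate, so assume $I_\alpha<0$, fix a minimizing sequence $\{u_n\}$ for $I_\alpha$, and define $v_n:=\sigma u_n$ with $\sigma:=(\lambda/\alpha)^{1/2}>1$, so that $\|v_n\|_{L^2}^2=\lambda$. Writing $T(u):=\tfrac{1}{2}\int(|\nabla u|^2+c_1|u|^2)$ and $V(u):=\int K|u|^{p+2}$, the first scales as $\sigma^2$ and the second as $\sigma^{p+2}$, giving
\[
{\mathcal E}_1(v_n)=\sigma^2 T(u_n)-\sigma^{p+2}V(u_n)=\sigma^2{\mathcal E}_1(u_n)-\sigma^2(\sigma^p-1)V(u_n).
\]
Extracting a subsequence with $V(u_n)\to V_\infty$, this yields $I_\lambda\le \sigma^2 I_\alpha-\sigma^2(\sigma^p-1)V_\infty$. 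Provided $V_\infty\ge 0$, the second summand is $\le 0$ while the first satisfies $\sigma^2 I_\alpha<I_\alpha$ (because $\sigma>1$ and $I_\alpha<0$), so strict subadditivity $I_\lambda<I_\alpha$ follows.

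The main obstacle is arranging $V_\infty\ge 0$ along the minimizing sequence. In the application to the hyperbolic problem this is automatic: the conjugated coefficient $c_1(x)=V_d(x)+\left(\tfrac{d-1}{2}\right)^2$ is pointwise nonnegative, since the lemma above on $\tilde{V}$ combined with the identity $V_d(0)=-(d-1)(d-3)/12$ gives the uniform bound $c_1\ge d(d-1)/6>0$. Thus $T(u)\ge 0$ for every $u\in H^1$, and ${\mathcal E}_1(u_n)\to I_\alpha<0$ forces $V(u_n)>0$ for all sufficiently large $n$, whence $V_\infty\ge 0$. In the full generality of Lions \cite{L2} one either imposes a sign/coercivity condition that plays the same role or augments the scaling with a translation to infinity (using $K\to 0$) to rule out $V_\infty<0$; either way, the scaling identity above is the engine producing the strict inequality.
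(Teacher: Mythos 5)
A preliminary remark: the paper states this result as a quotation from Lions \cite{L2} and supplies no proof, so there is no in-paper argument to compare against; your proposal has to stand on its own. Its skeleton is the standard one and most of it is sound: $\bar f\equiv 0$ gives $I^\infty_\mu=0$ by dilation, so \eqref{eqn:sub} collapses to the strict monotonicity $I_\lambda<I_\alpha$ for $\alpha\in[0,\lambda)$ (you also, correctly, drop the endpoint $\alpha=\lambda$, at which the stated inequality is vacuously false); necessity follows from the case $\alpha=0$; and the rescaling $v_n=\sigma u_n$ together with the homogeneity mismatch between the quadratic part $T$ and the $(p+2)$-homogeneous part $V$ is exactly the right engine for sufficiency, the strictness coming from $\sigma^2 I_\alpha<I_\alpha$ when $I_\alpha<0$.

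The genuine gap is the step $V_\infty\ge 0$, which you name but do not close. For the theorem as stated, $K$ is only assumed continuous with $K\to 0$ and may change sign, and neither of your suggested repairs is an argument: no coercivity of $T$ is among the assumptions \eqref{eqn:E1assump1}, and translating the minimizing sequence to infinity kills the $c_1$-term along with $V$, so you lose the property $\mathcal{E}_1(u_n)\to I_\alpha$ that the scaling step needs. Moreover, your attempted closure ``in the application'' rests on a misreading: the paper sets $c_1(x)=V_d(|x|)$ (the constant $(d-1)^2/4$ is put into $c_2$, not $c_1$), and $V_d\le 0$ for $d\ge 4$, so $c_1$ is not pointwise nonnegative --- the quantity you bounded below by $d(d-1)/6$ is $c_2$. (One can still show $T\ge0$ in the application, but only via the spectral bound $-\Delta_{\HS^d}\ge (d-1)^2/4$ transported through the conjugation, not by pointwise positivity of $c_1$.) The fix you actually want there is much simpler and does not pass through $T$ at all: $K(|x|)=\frac{1}{p+2}\left(|x|/\sinh|x|\right)^{(d-1)p/2}>0$ pointwise, so $V(u)\ge 0$ for \emph{every} $u$ and $V_\infty\ge0$ is automatic. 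To make the general statement honest you should either add the hypothesis $K\ge 0$ (which is all that is used in this paper) or supply the additional argument from \cite{L2} handling sign-changing $K$; as written, the sufficiency direction is incomplete.
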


\begin{thm}[Lions]
\label{thm:lions3}
Assume \eqref{eqn:E2assump} holds.  For $0<p<\frac{4}{d-2}$, $d \geq 2$, all minimizing sequences of $J_\delta$ are relatively compact in $H^1$ if and only if
\begin{eqnarray*}
J_\delta < J_\delta^\infty.
\end{eqnarray*}
\end{thm}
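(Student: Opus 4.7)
The plan is to apply the concentration compactness lemma (Lemma \ref{lem:cc}) to the densities $\rho_n = |u_n|^2 + |\nabla u_n|^2$ associated to a minimizing sequence $(u_n)$ for $J_\lambda$. The coercivity hypothesis $\mathcal{E}_2(u) \geq \nu \|u\|_{H^1}^2$ from \eqref{eqn:E2assump} immediately yields that $(u_n)$ is bounded in $H^1$, so after extraction the total mass converges and the three CC alternatives apply. Combining coercivity with the constraint $\int K |u_n|^{p+2}\,dx = \lambda > 0$ and the boundedness of $K$ (which follows from $K$ continuous with a finite limit at infinity) also forces $J_\lambda > 0$, a fact that will be essential in the scaling step below.

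For the \emph{necessity} of $J_\lambda < J_\lambda^\infty$, one first checks $J_\lambda \leq J_\lambda^\infty$ by evaluating $\mathcal{E}_2$ on far translates of a near-optimizer for $J_\lambda^\infty$ and exploiting $c_2 \to \bar{c}_2$, $K \to \bar K$ at infinity. Were equality to hold, the translates of such a minimizing sequence for $J_\lambda^\infty$ would themselves form a minimizing sequence for $J_\lambda$ whose mass escapes to infinity, manifestly failing to be relatively compact in $H^1$.

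For \emph{sufficiency}, assume $J_\lambda < J_\lambda^\infty$ and rule out each non-compactness alternative of Lemma \ref{lem:cc}. Vanishing is discarded by the standard Lions consequence that $u_n \to 0$ in every $L^q$ with $2 < q < 2^*$: since $p < 4/(d-2)$ implies $p + 2 < 2^*$, the boundedness of $K$ forces $\int K |u_n|^{p+2}\,dx \to 0$, contradicting the constraint. For dichotomy, one decomposes $u_n \approx u_n^1 + u_n^2$ with supports separated by distance tending to infinity; since both pieces cannot remain in a bounded region, at least one of them, say $u_n^2$, escapes to infinity, so that the coefficients $c_2, K$ on its support converge to $\bar c_2, \bar K$. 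Exploiting the (almost) additive splitting of the energy then yields
\[
J_\lambda \geq J_\alpha + J_{\lambda-\alpha}^\infty, \qquad \alpha = \lim \int F(x, u_n^1)\,dx \in [0, \lambda].
\]
Upgrading this to a contradiction from the single-level hypothesis $J_\lambda < J_\lambda^\infty$ is the hardest step. Here the pure-power form of $F$ is essential: the homogeneities $F(x, tu) = t^{p+2} F(x, u)$ and $\mathcal{E}_2(tu) = t^2 \mathcal{E}_2(u)$ produce the scaling laws $J_\beta = (\beta/\lambda)^\theta J_\lambda$ and $J_\beta^\infty = (\beta/\lambda)^\theta J_\lambda^\infty$ with $\theta = 2/(p+2) \in (0, 1)$. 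Writing $s = \alpha/\lambda$ the dichotomy inequality becomes $(1 - s^\theta) J_\lambda \geq (1 - s)^\theta J_\lambda^\infty$, and a short calculus check shows that $s \mapsto (1-s)^\theta/(1-s^\theta)$ is monotonically increasing from the value $1$ on $[0,1)$; combined with $J_\lambda, J_\lambda^\infty > 0$ this forces $J_\lambda \geq J_\lambda^\infty$, contradicting the hypothesis.

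Finally, in the compactness alternative, one passes to a weak $H^1$ limit $u$ after translation $u_n(\cdot + y_n) \rightharpoonup u$. If $|y_n| \to \infty$, the translated coefficients $c_2(\cdot + y_n), K(\cdot + y_n)$ converge to the constants $\bar c_2, \bar K$, so $u$ would be an admissible competitor for $J_\lambda^\infty$ with $\mathcal{E}_2^\infty(u) \leq J_\lambda < J_\lambda^\infty$, an impossibility; hence $y_n$ stays bounded and a standard weak-to-strong upgrade (weak lower semicontinuity of $\mathcal{E}_2$ combined with tightness forcing $\int K|u|^{p+2} = \lambda$ and equality $\mathcal{E}_2(u) = J_\lambda$) yields relative $H^1$ compactness. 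The genuine difficulty throughout is the dichotomy step, where the scaling computation above is the place where the pure-power structure of the nonlinearity and the positivity of $J_\lambda$ furnished by coercivity come together to produce the contradiction.
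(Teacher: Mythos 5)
You should first note that the paper does not prove Theorem~\ref{thm:lions3} at all: it is quoted from Lions \cite{L2}, and the only related argument the paper supplies is the proof of the unnamed lemma in Section~\ref{S:conc-comp} establishing a minimizer for $J_\lambda$ in the degenerate case produced by the hyperbolic conjugation, where $\bar K=0$ and hence $J_\lambda^\infty=+\infty$. In that case dichotomy and escape to infinity are excluded for free, since any splitting estimate of the form $J_\lambda\geq J_\alpha+J_{\lambda-\alpha}^\infty$ reads $J_\lambda\geq+\infty$. Your write-up reconstructs the full Lions argument, and its substantive content beyond what the paper uses is exactly the step the paper never needs: the strict superadditivity coming from the homogeneity $J_\beta=(\beta/\lambda)^\theta J_\lambda$, $J^\infty_\beta=(\beta/\lambda)^\theta J^\infty_\lambda$ with $\theta=2/(p+2)\in(0,1)$, which upgrades the single-level hypothesis $J_\lambda<J_\lambda^\infty$ into a contradiction with dichotomy. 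That computation is sound --- all that is really required is $s^\theta+(1-s)^\theta\geq 1$ together with $J_\lambda^\infty\geq 0$ (which follows from $\bar c_2>0$), so the monotonicity claim, while true, is more than you need --- and your treatment of vanishing, of the necessity direction, and of the compactness alternative is the standard one and correct in outline.

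Two loose ends are worth tightening. First, the scaling identity is vacuous at the endpoint $s=1$, i.e.\ when the escaping piece $u_n^2$ carries a definite amount of $H^1$-mass but asymptotically none of the constraint integral: there the contradiction comes not from scaling but from the coercivity in \eqref{eqn:E2assump}, via $\mathcal{E}_2(u_n^2)\geq\nu\|u_n^2\|_{H^1}^2\geq\nu\delta>0$, which yields $J_\lambda\geq J_\lambda+\nu\delta$. Second, the assertion that $\alpha=\lim\int F(x,u_n^1)\,dx$ lies in $[0,\lambda]$ silently assumes $K\geq 0$; the hypotheses \eqref{eqn:E2assump} only require $K^+>0$ somewhere, so in the general statement one must either allow the two constraint levels to be arbitrary reals (extending the scaling law accordingly, as Lions does) or note that in the paper's application $K>0$ everywhere. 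Neither point affects the way the theorem is used in Section~\ref{S:conc-comp}, where $J^\infty_\lambda=+\infty$ makes the whole dichotomy discussion collapse to one line.
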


\begin{rem}
The constrained minimization problem for $J_\delta$ is used to prove
existence of solitons, which explains the ranges of $p$ in Theorem
\ref{thm:main-cm}.  However, we consider also the constrained
minimization problem for $I_\delta$, which is used to prove orbital
stability in Proposition \ref{P:os}.
\end{rem}

We now examine how to apply these theorems in our case.  
For $d \geq
3$, the potential $V_d (|x|) \leq  0$ for all $x$ and $a(x) \leq 0$, hence from the
arguments in Lemma \ref{lem:rad1}, taking a Schwarz symmetrization decreases the energy functionals $\mathcal{E}_1$ and $\mathcal{E}_2$.  Hence, a radial minimizer for $I_\delta$ and $J_\delta$ may be obtained.  

In the case of the hyperbolic soliton equations, we have from above
\begin{eqnarray*}
c_{1} (x) & = & V_d(|x|) \\
& = & - \frac{(d-1)(d-3)}{4} \frac{\sinh^2 r - r^2}{r^2 \sinh^2 r}, \\
c_2(x) & = & V_d(|x|) +\mu_d, \text{ and} \\
F(x,u) & = &   K(|x|)  |u|^{p+2},
\end{eqnarray*}
with
\be
K(|x|) = \frac{1}{p+2} \left( \frac{|x|}{\sinh |x|} \right)^{(d-1)p/2}
\ee
as usual.  Note we have for $d \geq 4$, $c_1 \leq 0$ and
\be
\lambda + \frac{(d-1)^2}{4} \geq c_2 (x) \geq \lambda + \frac{d(d-1)}{6},
\ee
while for $d =3$, $c_1 = 0$ and
$c_2 = \lambda + (d-1)^2/4$, and for $d=2$, $c_1 \geq 0$, and
\be
\lambda + \frac{1}{3} \geq c_2 \geq \lambda + \frac{1}{4}.
\ee 
Hence, we see if $0 < p < 4/d$,
\begin{eqnarray*}
\text{  (i)} && c_{1} \in L^1_{loc}, \ c_1 \in L^q \ \forall q > \frac{d}{2}, \\
\text{ (ii)} && \lim_{t \to 0} F(x,t) t^{-2} = 0, \ \lim_{t \to
  \infty} F(x,t) t^{-d^*} = 0 , \text{ and}\\
\text{(iii)} && f(x,t) \to 0 \ \text{as $|x| \to \infty$ uniformly for $t$ bounded}.
\end{eqnarray*}
Also,
\begin{eqnarray*}
I_\delta^\infty = \inf \{ \frac{1}{2} \int | \nabla u |^2 dx : u \in H^1 (\RR^d), \| u \|_{L^2}^2 = \delta \},
\end{eqnarray*}
so $I_\delta^\infty = 0$ by a simple scaling argument.  As a result, the subadditivity condition becomes
\begin{eqnarray*}
I_\delta < 0.
\end{eqnarray*}
We note that this sub-additivity condition holds for a sufficiently large mass, but is not true in general, see \cite{CMMT}, Appendix $3$.

In the case of $J_\delta$, for the $K$ resulting from hyperbolic
geometry, we have $\bar{K} = 0$ and hence $J_\delta^\infty =
\infty$.  
Also, note that in the notation for $\mathcal{E}_2$, we have
\begin{eqnarray*}
c_2(x) = \mu_d + V_d (x),
\end{eqnarray*}
so the assumptions \eqref{eqn:E2assump} are satisfied provided
$\lambda > (d-1)^2/4$.  Indeed, the only thing to check is that
$\mathcal{E}_2$ is bounded below by $\nu \| u \|_{H^1}^2$ for some
$\nu >0$.  But for $\lambda > (d-1)^2/4$, the set where $c_2 \leq 0$
is bounded, so the lower bound on $\mathcal{E}_2$ follows from the
Galiardo-Nirenberg-Sobolev inequality.

As the case $\bar{K}= 0$ represents an extremal case of the
concentration compactness formulation for $\mathcal{E}_2$, we present the proof here for completeness.

\begin{lem}
There exists a non-trivial minimizer for $J_\delta$.
\end{lem}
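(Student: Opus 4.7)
The plan is to apply the direct method of the calculus of variations to a minimizing sequence, exploiting the \emph{exponential decay of the weight}
\[
K(|x|) = \frac{1}{p+2}\left(\frac{|x|}{\sinh|x|}\right)^{(d-1)p/2}
\]
at infinity. Because $\bar K = 0$, the constraint functional $u \mapsto \int K|u|^{p+2}\,dx$ is weakly continuous on $H^1$-bounded sets, which simplifies concentration compactness considerably: the strict inequality $J_\lambda < J_\lambda^\infty$ of Theorem~\ref{thm:lions3} holds vacuously ($J_\lambda^\infty = +\infty$, its constraint set being empty), and a direct weak-compactness argument can replace the full dichotomy analysis.

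Take $u_n \in H^1$ with $\int K|u_n|^{p+2}\,dx = \lambda$ and $\mathcal{E}_2(u_n) \to J_\lambda$. The coercivity built into \eqref{eqn:E2assump}, $\mathcal{E}_2(u) \geq \nu \|u\|_{H^1}^2$, forces $\{u_n\}$ bounded in $H^1$, and one may extract a subsequence with $u_n \rightharpoonup u$ weakly in $H^1$, strongly in $L^q_{\mathrm{loc}}$ for $2 \leq q < 2^*$. To verify that the constraint persists in the limit, fix $\epsilon > 0$ and choose $R$ so large that $K(|x|) \leq \epsilon$ on $|x| \geq R$; then
\[
\int_{|x| \geq R} K|u_n|^{p+2}\,dx \leq \epsilon \,\|u_n\|_{L^{p+2}}^{p+2} \leq C\epsilon
\]
by the Sobolev embedding $H^1 \hookrightarrow L^{p+2}$ (valid since $p+2 < 2^*$), while on $B_R$ Rellich--Kondrachov yields $\int_{B_R} K|u_n|^{p+2}\,dx \to \int_{B_R} K|u|^{p+2}\,dx$. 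Sending $n \to \infty$ and then $\epsilon \to 0$ gives $\int K|u|^{p+2}\,dx = \lambda > 0$, so $u$ is nontrivial and admissible. Applying the identical splitting to the bounded, decaying function $V_d$ in place of $K$ produces $\int V_d|u_n|^2 \to \int V_d|u|^2$; together with the standard weak lower semicontinuity of $\int|\nabla u|^2$ and of $\mu_d \int|u|^2$, this delivers $\mathcal{E}_2(u) \leq \liminf_n \mathcal{E}_2(u_n) = J_\lambda$. Since $u$ lies in the constraint set, $\mathcal{E}_2(u) \geq J_\lambda$ as well, and equality holds.

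The only potential obstacle is vanishing of mass under weak convergence, which would yield $\int K|u|^{p+2}\,dx < \lambda$ and push $u$ out of the constraint set. In the standard Euclidean setting this is the central difficulty of concentration compactness, requiring subadditivity and often translations to follow mass escaping to infinity. Here the exponential decay of $K$ removes the difficulty entirely: any mass that leaks to infinity loses \emph{all} contribution to the weighted integral, so the constraint itself pins concentration to a bounded region and weak convergence becomes effectively strong against the weight. In effect, the hyperbolic geometry — manifested through the rapidly decaying conjugation factor $(r/\sinh r)^{(d-1)/2}$ — restores the compactness that is missing in the translation-invariant Euclidean problem.
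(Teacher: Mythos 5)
Your proof is correct, and it takes a genuinely different --- and more elementary --- route than the paper's. The paper runs the full concentration-compactness trichotomy of Lemma \ref{lem:cc} on $\rho_n = \lambda_n^{-1}|u_n|^{p+2}$: vanishing is excluded via Lions' Lemma I.1 plus interpolation against the constraint, dichotomy is excluded because $\bar K = 0$ forces $J_\lambda^\infty = \infty$, and in the surviving compactness case the decay of $K$ is invoked to keep the concentration centers $y_k$ bounded, whence strong $L^{p+2}$ convergence to an admissible limit. You instead observe that the decay of $K$ (and of $V_d$) already makes the constraint functional $u \mapsto \int K|u|^{p+2}\,dx$ and the potential term $\int V_d |u|^2\,dx$ weakly sequentially continuous on $H^1$-bounded sets --- your cutoff at radius $R$ combined with Rellich--Kondrachov is the standard proof of this compactness --- so the direct method (weak compactness plus weak lower semicontinuity of $\int |\nabla u|^2$ and $\mu_d\int|u|^2$) closes the argument with no case analysis at all. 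Both proofs hinge on exactly the same geometric input, namely that the conjugation factor $(r/\sinh r)^{(d-1)/2}$ kills the nonlinearity at spatial infinity; you package it as compactness of a weighted Sobolev embedding, while the paper packages it as the degeneration $J_\lambda^\infty = \infty$ of the problem at infinity. One remark: the paper's stated conclusion is nominally stronger (relative compactness in $H^1$ of every minimizing sequence, which is the form used later for orbital stability), but your argument delivers that too with one more line --- since $\int V_d|u_n|^2 \to \int V_d|u|^2$ and $\mathcal{E}_2(u_n) \to \mathcal{E}_2(u)$, the equivalent norm $\int\bigl(|\nabla u_n|^2 + \mu_d|u_n|^2\bigr)\,dx$ converges to that of $u$, which together with weak convergence upgrades to strong $H^1$ convergence.
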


\begin{proof}
The fact that compactness implies subadditivity is a standard result of concentration compactness found in \cite{L1}.

Hence, we seek to prove that subadditivity implies compactness.
Let $(u_n)_n$ be a minimizing sequence for $J_\delta$.  As $\| u_n
\|_{H^1}$ is bounded, we have $\| u_n \|_{L^{p+2}}^{p+2} =: \beta_n$
bounded, since $p < 4/(d-2)$.  Then, select a subsequence if necessary such that
\begin{eqnarray*}
\beta_n \to \tilde{\beta} >0.
\end{eqnarray*}
Define $\rho_n = \beta_n^{-1} |u_n|^{p+2}$.  Then,
\begin{eqnarray*}
\int \rho_n dx = 1
\end{eqnarray*}
and we may apply Lemma \ref{lem:cc}.

First, let us rule out the vanishing condition.  If
\begin{eqnarray*}
\sup_{y \in \RR^d} \int_{y + B_R} |\rho_n|^{p+2} dx \to 0 , \ \forall R < \infty,
\end{eqnarray*}
then $\rho_n \to 0$ in $L^\alpha$ for $p+2 < \alpha < 2 + \frac{4}{d-2} =
d^*$ by standard functional analytic arguments from Lemma I.1 in \cite{L2}.  
By assumption, $\| u_n \|_{L^2}$ is bounded for all $n$ hence by
interpolation, $u_n \to 0$ in $L^{p+2}$ and $K u_n^{p+2} \to 0$ in
$L^1$ 
contradicting the constraint.

Next, we must rule out dichotomy.  If such a dichotomy exists, it is clear that either
\begin{eqnarray*}
J_\delta \geq J_{\delta_1}^\infty + J_{\delta_2} = \infty
\end{eqnarray*}
or 
\begin{eqnarray*}
J_\delta \geq J_{\delta_2}^\infty + J_{\delta_1} = \infty,
\end{eqnarray*}
both of which provide a contradiction to the obvious fact that
$J_\delta < \infty$ as well as the sub-additivity condition.
%% NOTE (H): this is not entirely clear to me.

Finally, we have $\exists y_k$ such that $\forall \epsilon > 0$, $\exists R$ such that
\begin{eqnarray*}
\int_{y_k + B_R} \rho_k (x) dx \geq \beta_n - \epsilon.
\end{eqnarray*}
As $\beta_n$ and $\| u_n \|_{H^1}$ are bounded, we have $\| u_n \|_{L^{p+2}}$ bounded, hence if $|y_k| \to \infty$ we get a contradiction to the constraint 
\begin{eqnarray*}
\int K u_n^{p+2} dx = \delta.
\end{eqnarray*}
As a result, $|y_k|$ is bounded.
Hence, $u_n (y_n + \cdot)$ converges strongly in $L^{p+2}$ and weakly in $H^1$ to some $u \in H^1$ satisfying the constraint.  From here, relative compactness follows.

\end{proof}

\section{Soliton properties and stability}
\label{sec:props}

Since we know our soliton is spherically symmetric, we show in this
section that we have exponential decay and $C^2$ smoothness.  These
results follow from the standard ODE and maximum principle 
arguments of \cite{BeLi}, with the superficial modification that our
nonlinearity depends on $r$ but decays at infinity.  This is
summarized in the following Lemmas.

\begin{lem}
If $u$ is a spherically symmetric minimizer of the constrained minimization problems $I_\delta$ or $J_\delta$, then 
\begin{eqnarray*}
u \in C^2 (\RR^d)
\end{eqnarray*}
and
\begin{eqnarray*}
|D^\alpha u | \leq C e^{-\delta |x|}, \ x \in \RR^d
\end{eqnarray*}
for $C, \delta > 0$ and $ |\alpha| \leq 2$.
\end{lem}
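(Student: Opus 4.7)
The plan is to follow the Berestycki--Lions template \cite{BeLi}, exploiting spherical symmetry to reduce the Euler--Lagrange equation to an ODE, then bootstrapping regularity and comparing with an exponential supersolution for decay.

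First, I would write down the Euler--Lagrange equation for a constrained minimizer. A spherically symmetric critical point $u$ of either constrained problem satisfies (up to a Lagrange multiplier absorbed into $\mu_d$)
\be
-\Delta_{\RR^d} u + \Bigl(V_d(r) + \mu_d\Bigr) u - K(r)\, |u|^p u = 0, \qquad r = |x|,
\ee
since the angular term $a(x)\,\Delta_{\SP^{d-1}} u$ vanishes on radial functions. Because $u$ is radial, this becomes the second-order ODE
\be
u''(r) + \frac{d-1}{r} u'(r) = \Bigl(V_d(r) + \mu_d\Bigr) u(r) - K(r)\, |u(r)|^p u(r),
\ee
whose coefficients are smooth on $(0,\infty)$, with $V_d$ and $K$ extending smoothly to $r=0$ by the earlier lemma.

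Next I would bootstrap regularity. Since $u \in H^1(\RR^d)$, Sobolev embedding gives $u \in L^{q}$ for $q \leq 2d/(d-2)$. The subcritical assumption $p < 4/(d-2)$ then forces the right-hand side $(V_d + \mu_d) u - K |u|^p u$ to lie in $L^2_{\loc}$; elliptic regularity applied to $-\Delta u + u = g \in L^2_{\loc}$ upgrades $u$ to $H^2_{\loc}$, hence $W^{1,q}_{\loc}$ for some $q > d$, whence $u \in C^{0,\alpha}_{\loc}$. Iterating through Schauder estimates on the elliptic operator $-\Delta + \mu_d$ on $\RR^d$ (with continuous coefficients and continuous forcing) gives $u \in C^2(\RR^d)$. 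At the origin, regularity follows either from smoothness of radial $H^2$ solutions to the radial ODE or directly from elliptic regularity on a ball.

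For the exponential decay, the idea is a comparison argument. Since $V_d(r) \to 0$ and $K(r) \to 0$ as $r\to\infty$, and $u \to 0$ (using $u \in H^1 \cap C^0$ radial, and continuity combined with finite $L^2$ mass), for any $\epsilon \in (0, \mu_d)$ we can find $R$ large so that
\be
V_d(r) + \mu_d - K(r) |u(r)|^p \geq \mu_d - \epsilon \quad \text{for } r \geq R.
\ee
Thus $u$ satisfies, for $r \geq R$,
\be
-\Delta u + (\mu_d - \epsilon)\, u \leq 0 \text{ in the sense of the radial equation applied to } |u|,
\ee
and the function $w(x) = C e^{-\delta |x|}$, with $\delta^2 < \mu_d - \epsilon$ and $C$ chosen so that $w \geq |u|$ on $\{|x|=R\}$ and at infinity, satisfies $-\Delta w + (\mu_d-\epsilon) w \geq 0$ outside $\{0\}$. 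The standard maximum principle then gives $|u(x)| \leq C e^{-\delta |x|}$ for $|x| \geq R$, hence globally by adjusting $C$. Decay of $D^\alpha u$ for $|\alpha| \leq 2$ follows: the equation expresses $\Delta u$ as an exponentially decaying function of $u$, so local elliptic $W^{2,q}$ estimates applied on balls of unit radius centered at $x$ convert pointwise exponential decay of $u$ into pointwise exponential decay of $\nabla u$ and $D^2 u$.

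I expect the main obstacle to be the exponential decay step: one has to choose the comparison function carefully (matching boundary values on $\{|x|=R\}$) and justify the maximum principle on the exterior domain for a function that is a priori only known to be $C^2$ and in $L^2$. The potential $V_d$ being nonnegative for $d=2$ and nonpositive for $d \geq 4$ is harmless since $\mu_d > 0$ dominates and $V_d$ is $O(r^{-2})$, so the asymptotic operator is $-\Delta + \mu_d$ in either case.
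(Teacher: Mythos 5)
Your proposal is correct and coincides with the paper's approach: the paper offers no written proof, simply asserting that the lemma ``follows from the standard ODE and maximum principle arguments of \cite{BeLi}, with the superficial modification that our nonlinearity depends on $r$ but decays at infinity,'' which is exactly the radial Euler--Lagrange equation, elliptic bootstrap, and exponential-supersolution comparison you lay out. One small imprecision: from $u\in L^{2d/(d-2)}$ the term $K|u|^{p}u$ lands in $L^{2}_{\loc}$ only when $p\le 2/(d-2)$, so for larger subcritical $p$ the bootstrap must begin with $W^{2,q}$ estimates for $q=2d/\bigl((d-2)(p+1)\bigr)<2$ and iterate (terminating since $p<4/(d-2)$), which is consistent with the iteration you already invoke.
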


\begin{lem}
There exists some $\delta > 0$ such that
\begin{eqnarray*}
|D^\alpha u(x) | \leq C e^{-\delta |x|},
\end{eqnarray*}
for some $C$ and $|\alpha| \leq 2$.
\end{lem}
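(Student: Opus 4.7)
The plan is to follow the standard Berestycki-Lions comparison argument, modified only by the fact that the nonlinearity in the conjugated problem carries an extra exponentially decaying prefactor. Since $u$ is radial, the angular offset in $\tilde{\Delta}$ vanishes and $u$ satisfies
\begin{equation*}
-\Delta u + c_2(|x|)\, u = K(|x|)\, u^{p+1} \quad \text{on } \RR^d,
\end{equation*}
where $c_2(r) = \mu_d + V_d(r) \to \mu_d > 0$ as $r \to \infty$ and $K(r) = (p+2)^{-1}(r/\sinh r)^{(d-1)p/2}$ is itself exponentially decaying.

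The first step is to upgrade $u \in H^1$ to $u(x) \to 0$ pointwise as $|x| \to \infty$. Standard elliptic regularity yields $u \in C^{0,\alpha}_{\rm loc}$; combining a local $L^p$-to-$L^\infty$ estimate with the tail bound $\| u \|_{H^1(\{|x|>R\})} \to 0$ and a finite covering of each annulus by unit balls then gives $\sup_{|x|>R} u \to 0$. With this in hand, choose $R_0$ large enough that
\begin{equation*}
W(r) := c_2(r) - K(r)\, u(r)^p \geq \frac{\mu_d}{2}, \quad r \geq R_0,
\end{equation*}
which is possible since $V_d(r)$, $u(r)$, and $K(r)$ all tend to $0$.

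Rewriting the equation on the exterior domain $\{|x|>R_0\}$ as $-\Delta u + W(r)\, u = 0$, I would compare with the radial supersolution $w(r) = A e^{-\delta r}$ for any $0 < \delta^2 < \mu_d/2$. A direct computation gives
\begin{equation*}
-\Delta w + \frac{\mu_d}{2} w = \Bigl(\frac{\mu_d}{2} - \delta^2 + \frac{(d-1)\delta}{r}\Bigr) w > 0,
\end{equation*}
so upon choosing $A$ with $A e^{-\delta R_0} \geq \sup_{|x|=R_0} u$, the difference $\psi = w - u$ satisfies $(-\Delta + \mu_d/2)\psi \geq 0$ on $\{|x|>R_0\}$, is non-negative on $\{|x|=R_0\}$, and vanishes at infinity. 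The standard maximum principle on the exterior domain then yields $u(x) \leq A e^{-\delta |x|}$ for all $x$, after absorbing the bounded region $|x| \leq R_0$ into the constant.

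For the derivative bounds I would bootstrap via elliptic regularity. Writing $-\Delta u = -c_2(|x|)\, u + K(|x|)\, u^{p+1} =: g$, the pointwise decay of $u$ gives $|g(x)| \leq C e^{-\delta' |x|}$ for some $\delta' > 0$. Interior Schauder estimates on each unit ball $B(x_0, 1)$ then produce
\begin{equation*}
\| u \|_{C^{2,\alpha}(B(x_0, 1/2))} \leq C\bigl(\| u \|_{L^\infty(B(x_0,1))} + \| g \|_{C^{0,\alpha}(B(x_0,1))}\bigr),
\end{equation*}
and both terms on the right are $O(e^{-\delta'' |x_0|})$, which yields the claim for $|\alpha| \leq 2$. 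The main technical point lies in Step 1: the supersolution comparison only works once the nonlinear term has been absorbed into the effective potential, which requires $u(x) \to 0$ at infinity. Once that is established, the rest is the standard Berestycki-Lions template.
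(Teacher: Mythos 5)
Your argument is correct and is exactly the route the paper intends: it invokes the ``standard ODE and maximum principle arguments'' of Berestycki--Lions, noting only that the conjugated nonlinearity depends on $r$ but decays at infinity, and your supersolution comparison with $Ae^{-\delta r}$ after absorbing $K(r)u^p$ into the effective potential, followed by the elliptic ($W^{2,q}$/Schauder) bootstrap for $|\alpha|\leq 2$, is precisely that template carried out in detail. The only cosmetic slips are that the right-hand side of the Euler--Lagrange equation is $(r/\sinh r)^{(d-1)p/2}u^{p+1}$ rather than $K(|x|)u^{p+1}$ (the $1/(p+2)$ belongs to the integrated nonlinearity $F$, not to $\tf$), and the comparison is cleanest phrased for $-\Delta+W$ rather than $-\Delta+\mu_d/2$; neither affects the conclusion.
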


We now proceed to prove orbital stability of solitons.  
Though we will get existence for any $p < \frac{4}{d-2}$, we will be able to show orbital stability for $p < \frac{4}{d}$ using the arguments of \cite{CL} and the Gagliardo-Nirenberg Inequality.  We will denote by $S$ the set of solutions to the minimization problem $I_\lambda$.

\begin{prop}[Orbital Stability]
\label{P:os}
Given $p < \frac{4}{d}$ and $\lambda > 0$ sufficiently large mass such that $I_\lambda < 0$, for all $\epsilon > 0$, there exists a $\delta >0$ such that if
\begin{eqnarray*}
\inf_{\gamma} \| u_0 (\Omega) - e^{i \gamma} R_\lambda (\Omega ) \|_{H^1 (\HS^d)} < \delta
\end{eqnarray*}
for $R_\lambda$ in $S$
then the corresponding solution $u (\Omega,t)$ of \eqref{nls:hs} satisfies
\begin{eqnarray*}
\inf_{\phi \in S} \| u (\Omega,t) - \phi \|_{H^1 (\HS^d)} < \epsilon.
\end{eqnarray*}
\end{prop}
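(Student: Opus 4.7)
The plan is to adapt the Cazenave--Lions contradiction argument \cite{CL} to the hyperbolic setting, converting stability into a compactness statement for minimizing sequences of the variational problem solved in Section \ref{S:conc-comp}. The key inputs are (i) conservation of the functionals $Q$ and $E$ along the $H^1$ flow, which follows from the global well-posedness theorem of Banica recalled in Section \ref{S:previous-results} since $p<4/d$; (ii) the variational characterization of $R_\lambda$ as a ground state; and (iii) the concentration-compactness analysis of Section \ref{S:conc-comp}.

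Suppose the claim fails. Then there exist $\epsilon_0>0$, initial data $u_0^n \in H^1(\HS^d)$ with $\inf_\gamma \|u_0^n-e^{i\gamma}R_\lambda\|_{H^1(\HS^d)}\to 0$, and times $t_n$ such that the corresponding solutions $u^n$ satisfy $\inf_\gamma \|u^n(\cdot,t_n)-e^{i\gamma}R_\lambda\|_{H^1(\HS^d)}\geq \epsilon_0$. Continuity of $Q$ and $E$ on $H^1$ combined with their conservation yield $Q(u^n(t_n))\to Q(R_\lambda)$ and $E(u^n(t_n))\to E(R_\lambda)$. Set $v^n:=u^n(t_n)$ and rescale by $\alpha_n:=(Q(R_\lambda)/Q(v^n))^{1/2}\to 1$ so that $\tilde v^n:=\alpha_n v^n$ meets the constraint $Q(\tilde v^n)=Q(R_\lambda)$ exactly; the Gagliardo--Nirenberg inequality together with $p<4/d$ guarantees that $\{\tilde v^n\}$ is $H^1$-bounded and that $E(\tilde v^n)\to E(R_\lambda)$. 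Thus $\{\tilde v^n\}$ is a minimizing sequence for the constrained problem whose minimizer is $R_\lambda$.

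I would then conjugate by the isometry $T=\phi\cdot$ of Section \ref{S:red-euc} to pass to the Euclidean picture and invoke the compactness argument of Section \ref{S:conc-comp} verbatim: vanishing is excluded by the constraint, and dichotomy is excluded because the decay $\bar K=0$ forces the problem at infinity to satisfy $J_\alpha^\infty=+\infty$. Extracting a subsequence and absorbing a phase $e^{-i\gamma_n}$, the minimizing sequence converges strongly in $H^1(\RR^d)$ to a minimizer $R^\star$. Transferring back to $\HS^d$ via $T^{-1}$ and using $\alpha_n\to 1$ yields strong $H^1(\HS^d)$ convergence of $e^{-i\gamma_n}v^n$ to an element of the orbit $\{e^{i\gamma}R_\lambda\}$, contradicting the lower bound $\epsilon_0$.

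The main obstacle is identifying the limit $R^\star$ with a phase rotation of $R_\lambda$. The paper leaves uniqueness of the ground state open, so strictly speaking the argument yields stability of the full \emph{set} of mass/energy ground states, and one should read the proposition in that customary sense. A structural point that helps here is that the conjugated energy is \emph{not} translation-invariant on $\RR^d$---both the potential $c_2(x)$ and the coupling $K(|x|)$ depend genuinely on $|x|$---so translations do not enlarge the orbit as in the Euclidean soliton problem, and every minimizer is pinned at the origin in the polar chart on $\HS^d$.
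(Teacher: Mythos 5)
Your proposal is correct and follows essentially the same route as the paper: the Cazenave--Lions contradiction argument, using conservation of $Q$ and $E$ together with global well-posedness for $p<4/d$ to turn $u^n(t_n)$ into a minimizing sequence, and then invoking the relative compactness of minimizing sequences established by the concentration-compactness analysis of Section~\ref{S:conc-comp}. The only cosmetic differences are that you renormalize the mass by scaling while the paper handles $\|u_0\|_{L^2}\neq\lambda$ by continuity of $I_\lambda$ in $\lambda$, and you explicitly flag (correctly) that absent uniqueness the conclusion is stability of the set of minimizers, which is exactly the set $S$ the paper's proof actually works with.
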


\begin{proof}[Proof following \cite{CL}] 

Let us take $u_0$ as initial data for \eqref{nls:hs} after conjugation
to Euclidean space.  As mentioned previously, we have the conserved quantities
\begin{eqnarray*}
Q (u) = Q (u_0)
\end{eqnarray*}
and
\begin{eqnarray*}
E (u) = E (u_0).
\end{eqnarray*}
We wish to show
\begin{eqnarray*}
\forall \epsilon > 0, \ \exists \eta > 0, \inf_{\phi \in S} \| u_0 - \phi \|_{H^1} < \eta \Longrightarrow  \inf_{\phi \in S} \| u(t,\cdot) - \phi \|_{H^1} < \epsilon.
\end{eqnarray*}
Assume for a moment that $\| u_0 \|_{L^2} = \lambda$, and assume for
the purpose of contradiction that orbital stability is false.  Then, there exist $\epsilon_0 > 0$, $u_0^n$ and $t_n \geq 0$ such that
\begin{eqnarray*}
\left\{ \begin{array}{c}
u_0^n \in H^1, \ \| u_0^n \|_{L^2} = \lambda, \ E (u_0^n) \to I_\lambda \\
\inf_{\phi \in S} \| u^n (t_n,\cdot) - \phi \|_{H^1} \geq \epsilon_0.
\end{array} \right.
\end{eqnarray*}
However, from the conservation laws we know 
\begin{eqnarray*}
E (u^n (t_n)) \to I_\lambda, \ \| u^n (t_n) \|_{L^2} = \lambda.
\end{eqnarray*}
Hence, $u^n (t_n, \cdot)$ is relatively compact in $H^1$, so
\begin{eqnarray*}
\inf_{\phi \in S} \| u^n (t_n,\cdot) - \phi \|_{H^1}  \to 0
\end{eqnarray*}
and we arrive at a contradiction.

If $\| u_0 \|_{L^2} \neq \lambda$, then repeat the argument with $S$ defined to be the set of solutions to $I_{\| u_0 \|_{L^2}}$ and apply continuity with respect to $\lambda$.

\end{proof}

\begin{rem}
If $R_\lambda$ is shown to be the unique radial minimizer, the resulting statement of the theorem would read
\begin{eqnarray*}
\inf_{\gamma} \| u (\Omega,t) -  e^{i \gamma} R_\lambda (\Omega )  \|_{H^1 (\HS^d)} < \epsilon.
\end{eqnarray*}
\end{rem}

\begin{rem}
There is a much stronger notion of stability referred to as {\it
  asymptotic stability} or {\it scattering} which is given by the
following definition.
\begin{defn}
Let
\begin{eqnarray*}
u_0 (\Omega) = e^{i \gamma_0} R_{\lambda_0} (\Omega) + \phi (\Omega).
\end{eqnarray*}
Then, the corresponding solution $u (\Omega, t)$ of \eqref{nls:hs} is said to be {\it asymptotically stable} if there exists $w(\Omega) \in L^2 (\HS^d)$ and $\sigma_\infty = (\lambda_\infty, \gamma_\infty)$ such that
\begin{eqnarray*}
\lim_{t \to \infty} \| u(\Omega,t) - e^{i \gamma_\infty} R_{\lambda_\infty} (\Omega) - e^{i \frac{t}{2} \Delta_{\HS^d}} w \|_{L^2 (\HS^d)} = 0.
\end{eqnarray*}
\end{defn}
In the present note we do not prove
asymptotic stability, however it is possible to outline the necessary spectral results.  Namely, this sort of stability is proved by linearizing about the soliton, proving dispersive estimates for the resulting skew-symmetric matrix
Hamiltonian operator $\mathcal{H}$, using the modulation parameters to
guarantee orthogonality to any discrete spectrum of $\mathcal{H}$, and
finally doing a standard contraction map on the coupled infinite
dimensional and finite dimension system.  In order to linearize
effectively, we must have a nonlinearity $\beta$ such that
$\beta''(s)$ is bounded for small $s$.  Then, the resulting linearized
operator $\mathcal{H}$ must have a well-understood and well-behaved
spectrum.  

Explicitly, we use the ansatz \begin{eqnarray*}
\psi = e^{i \lambda t}(R_{\lambda} + \phi(x,t)).
\end{eqnarray*}  
For simplicity, set $R = R_\lambda$.  Inserting this into the equation we know that since $\phi$ is a soliton solution we have 
\begin{eqnarray}
i (\phi)_t + \Delta_{\HS^d} (\phi) & = & -\beta( R^2) \phi - 2 \beta'( R^2 )  R^2 \text{Re}(\phi) + O (\phi^2),
\end{eqnarray}
by splitting $\phi$ up into its real and imaginary parts, then doing a Taylor Expansion. Hence, if $\phi = u + iv$, we get
\begin{eqnarray}
\partial_t \left( \begin{array}{c}
u \\
v
\end{array} \right) = \mathcal{H} \left( \begin{array}{c}
u \\
v
\end{array} \right),
\end{eqnarray}
where 
\begin{eqnarray}
\mathcal{H} =  \left( \begin{array}{cc}
0 & L_{-} \\
-L_{+} & 0
\end{array} \right),
\end{eqnarray}
where $$L_{-} = - \Delta_{\HS^d} + \lambda - \beta( R_\lambda )$$ and 
$$L_{+} = - \Delta_{\HS^d} + \lambda - \beta( R_\lambda ) - 2 \beta' (R^2_\lambda) R_\lambda^2.$$
Set $\mu = \lambda + \frac{(d-1)^2}{4}$.
\begin{defn}
\label{spec:defn1}
A Hamiltonian, $\mathcal{H}$ is called admissible if the following hold: \\
1)  There are no embedded eigenvalues in the essential spectrum, \\
2)  The only real eigenvalue in $[-\mu, \mu ]$ is $0$, \\
3)  The values $\pm \lambda$ are not resonances. \\
\end{defn}

\begin{defn}
\label{spec:defn2}
Let (NLS) be taken with nonlinearity $\beta$.  We call $\beta$ admissible if there exists a minimal mass soliton, $R_{min}$, for (NLS) and the Hamiltonian, $\mathcal{H}$, resulting from linearization about $R_{min}$ is admissible in terms of Definition \ref{spec:defn1}.
\end{defn}

The spectral properties we need for the linearized Hamiltonian equation in order to prove stability results are precisely those from Definition \ref{spec:defn1}.
\end{rem}

\section{Other nonlinearities}
\label{S:other-nl}
Note that much of the analysis above is in a sense simpler than in the
Euclidean case because in our energy functional, the potentials and
coefficients involve terms which decay at spatial infinity, and the
nonlinearity, once conjugated to Euclidean space, decays
exponentially.  Hence if we have a nonlinearity of the form
\begin{eqnarray}
\label{E:nl-grow}
g(\Omega) |u|^p u,
\end{eqnarray}
where $g(\Omega)$ is now allowed to grow exponentially at a rate
slower than
\be
\left( \frac{\sinh r}{r} \right)^{(d-1)p/2},
\ee
it is actually closer to the Euclidean case.  In other words, our
techniques extend trivially to show solitons exist with extremely
powerful nonlinearities, growing exponentially at spatial infinity.
It is unclear then whether simple power nonlinearities as in
\eqref{eqn:hs}, or exponentially growing nonlinearities as in
\eqref{E:nl-grow} are more ``physical'', as they resemble the
Euclidean case more.

Saturated nonlinearities are of the form
\begin{eqnarray}
\label{sat:eqn1}
f (s) = s^{q} \frac{s^{p-q}}{1 + s^{p-q}}, 
\end{eqnarray}
where $p > 2+\frac{4}{d}$ and $\frac{4}{d}  > q > 0$ for $d \geq 3$ and $\infty > p > 2+\frac{4}{d} > \frac{4}{d} > q > 0$ for $d < 3$.

\begin{rem}
For $|u|$ large, the behavior is $L^2$ subcritical and for $|u|$ small, the behavior is $L^2$ supercritical.  For the case of asymptotic stability, $p$ is chosen much larger than the $L^2$ critical exponent, $\frac{4}{d}$ in order to allow sufficient regularity when linearizing the equation.  
\end{rem}

Then, upon conjugation by $\phi$, we have
\begin{eqnarray*}
f(x,|u|) = \phi^{q-1} \frac{|u|^{p-1}}{\phi^{q-p} + |u|^{p-q}}. 
\end{eqnarray*}
Since
\begin{eqnarray*}
\phi^{q-p} > 1 
\end{eqnarray*}
and
\begin{eqnarray*}
\lim_{x \to \infty} \phi^{q-p} (x) = \infty,
\end{eqnarray*}
using similar techniques to those above we may prove similar soliton existence results for all $\mu$. However, similar to the Euclidean study of saturated nonlinearities, solitons for $\mu$ large will be stable and solitons for $\mu$ small will be unstable.

\bibliographystyle{alpha}
\bibliography{CM-bib}

\end{document}